\documentclass[18pt]{siamart250211}

\usepackage[english]{babel}
\usepackage{color}
\usepackage{amsmath}
\usepackage{amsfonts}
\usepackage{graphicx}
\usepackage{algorithmic}
\usepackage{comment}
\usepackage[normalem]{ulem}
\usepackage{multirow}
\usepackage{booktabs}
\usepackage{indentfirst}
\usepackage{enumitem}

\DeclareMathOperator{\spann}{span}

\title{Mixed Precision Orthogonalization-Free Projection Methods for Eigenvalue and Singular Value Problems}
\author{
Tianshi Xu\thanks{Department of Mathematics, Emory University, Atlanta, GA, 
  (\email{tianshi.xu@emory.edu, yxi26@emory.edu}). Research of Y. Xi is supported by NSF DMS 2208412.}
   \and Zechen Zhang\thanks{Department of Computer Science and Engineering, University of Minnesota, Minneapolis, MN, 
  (\email{zhan5260@umn.edu,saad@umn.edu}). Research supported by the NSF award DMS 2208456.}
  \and Jie Chen\thanks{MIT-IBM Watson AI Lab, IBM Research.
  (\email{chenjie@us.ibm.com})}
\and Yousef Saad\footnotemark[2]
\and Yuanzhe Xi\footnotemark[1]
}
\usepackage{amsopn}

\headers{Mixed Precision Orthogonalization-Free RR}{T. Xu, Z. Zhang, J. Chen, Y. Saad, Y. Xi} 

\begin{document}
\maketitle
\begin{abstract}
Mixed-precision arithmetic offers significant computational advantages for large-scale matrix computation tasks, yet preserving accuracy and stability in eigenvalue problems and the singular value decomposition (SVD) remains challenging. This paper introduces an approach that eliminates orthogonalization requirements in traditional Rayleigh-Ritz projection methods. 
The proposed method employs non-orthogonal bases computed at reduced precision, resulting in bases computed without inner-products. A primary focus is on maintaining the linear independence of the basis vectors.
Through extensive evaluation with both synthetic test cases and real-world applications, we demonstrate that the proposed approach achieves the desired accuracy while fully taking advantage of mixed-precision arithmetic.
\end{abstract}

\begin{keywords} 
  Mixed precision, singular value decomposition, eigenvalue problem, Rayleigh-Ritz, orthogonalization-free, GPU
\end{keywords}

\begin{AMS}
  15A23, 65F25, 65Y05, 68W10
\end{AMS}

%
%

\section{Introduction}\label{sec1}

Recent research on themes related to high-performance computing indicates a strong surge of interest in low-precision and mixed-precision arithmetic.  There are compelling performance incentives to work with lower precision. 
Two related benefits of working in reduced precision are the \emph{lower energy consumption} and the \emph{lighter storage requirement}, which also leads to less communication \cite{anzt2019adaptive}.  
Using mixed-precision arithmetic has long been an effective approach in many areas. 
Scientists and engineers in scientific computing have traditionally leaned toward double-precision arithmetic by default, but this is now being questioned as more studies are being undertaken, and a better understanding is emerging on the impact of low precision on common computations, see, e.g., \cite{abdelfattah2019progressive,anzt2019adaptive,haidar2019investigating}.

Mixed-precision arithmetic provides notable computational benefits, yet maintaining accuracy when using reduced precision remains challenging. In this paper, we analyze the impact of mixed-precision arithmetic on eigenvalue and singular value computations. Eigenvalue decomposition and Singular Value Decompositions (SVD) constitute fundamental numerical linear algebra kernels, which are widely used in diverse scientific and data science applications~\cite{chang2005svd,guo2015efficient,halko2011finding,evsl,xu2021pargemslr,shi2018computing,shi2022non,xi2018fast}. For large-scale problems, these decompositions are typically replaced by partial eigenvalue or singular value problems where a few eigenvalues or singular values need to be computed along with their associated eigenvectors or singular vectors. This is often achieved via 
projection-type methods
\cite{parlett1973geometric,lanczos1950iteration,arnoldi1951principle}. These methods rely on matrix-vector multiplications (MatVecs) and other simple linear algebra operations to build a suitable subspace and then extract eigenvalue and singular value approximations from this subspace \cite{saad2011numerical}. In this paper, we first identify and discuss the specific challenges associated with  the use of  mixed-precision arithmetic and then propose effective strategies to address them.

In the past few years, researchers have already explored a few methods to mitigate accuracy loss in mixed-precision computations of eigenvalues and singular values. One popular approach relies on  iterative refinement originating from Newton's method for mixed-precision refinement in standard eigenvalue computations~\cite{dongarra1980improving,dongarra1983improving}. This strategy has been effectively extended to symmetric eigenvalue problems~\cite{ogita2018iterative,petschow2014improved,tsai2022mixed} and the SVD~\cite{ogita2020iterative}. Another direction of research involves identifying optimal precision levels for storing matrices without compromising performance~\cite{ooi2020effect}. Recent studies also recommend using probabilistic error analysis to determine precision requirements throughout computational phases, emphasizing high-precision reorthonormalization for maintaining accuracy~\cite{higham2022mixed}. One such example is the mixed-precision, single-pass Nystr\"{o}m method proposed in \cite{carson2024single}, which executes computationally intensive matrix multiplication operations at lower precision. 

In this paper, we focus on the Rayleigh-Ritz (RR) projection framework~\cite{saad2011numerical} for efficiently computing subsets of eigenvalues and singular values. The RR projection method involves two main stages: the first stage constructs a subspace basis encapsulating essential matrix information, typically an orthogonal basis derived via QR factorization; the second stage projects the original matrix onto this subspace, extracting the targeted eigenvalue or singular value approximations. Since classical QR-based orthogonalization methods can suffer from substantial orthogonality loss when performed in low precision, the effectiveness of the RR projection method will also be undermined in this case. To address this issue, we introduce a refined RR projection approach to improve both accuracy and computational performance with mixed precision arithmetic.

Our main contributions are the following:

\vspace{0.5\baselineskip}

\begin{enumerate}[label=\textbf{\arabic*}.]
\setlength{\itemsep}{0.5\baselineskip}

\item We introduce the Orthogonalization-Free Rayleigh-Ritz (OFRR) procedure, specifically designed to address the challenges of maintaining numerical accuracy in eigenvalue and singular value computations performed with mixed-precision arithmetic. 
Traditional approaches, reliant on QR-based orthogonalization, often suffer significant accuracy degradation in low-precision environments. 
To overcome this limitation, OFRR eliminates the explicit orthogonalization step, enabling the extraction of accurate spectral information from non-orthogonal basis vectors.

\item {We investigate the use of various approaches for generating non-orthogonal bases for the proposed OFRR procedure. Furthermore, we show that the Hessenberg process—a variant of LU factorization—outperforms Gram–Schmidt orthogonalization due to its inner-product-free feature.}

\item To evaluate the performance and accuracy of the OFRR algorithm, we conduct extensive numerical experiments using a diverse set of matrices. 
These includ challenging real-world problems from the SuiteSparse Matrix Collection \cite{davis2011university}, as well as kernel matrices arising in Gaussian processes \cite{posterior,higp}. 
The results indicate that OFRR significantly enhances approximation accuracy compared to traditional approaches that rely on orthogonalization steps.  In addition, we implement OFRR on GPU architectures to assess its practical performance. The GPU-accelerated OFRR implementation highlights the algorithm's scalability and applicability in large-scale matrix computations.
\end{enumerate}

The remaining sections are organized as follows. In Section~\ref{sec2}, we use subspace iteration as an illustrative example to demonstrate the challenges posed by low-precision arithmetic in eigenvalue computations. We then introduce the Orthogonalization-Free Rayleigh-Ritz (OFRR) procedure in Section~\ref{sec3} and examine several strategies for generating non-orthogonal bases in Section \ref{sec4}. The effectiveness and accuracy of the proposed OFRR algorithm are verified through extensive numerical experiments in Section \ref{sec5}, and concluding remarks are drawn in Section \ref{sec6}.

Following \texttt{MATLAB} syntax,
we use subscripts to access elements and submatrices of matrices and vectors. For a matrix $\mathbf{A}$, the notation $\mathbf{A}_{i,:}$ represents the entire $i$-th row, while $\mathbf{A}_{:,j}$ represents the entire $j$-th column and $\mathbf{A}_{i,j}$ is the 
entry at the $i$-th row and $j$-th column.  Similarly, for a vector $\mathbf{v}$, $\mathbf{v}_i$ indicates the $i$-th entry.  More generally, for integers $p \le q$ and $r \le s$,  $\mathbf{A}_{p:q,r:s}$ denotes the submatrix of $\mathbf{A}$ consisting of rows $p$ through $q$ and columns $r$ through $s$. The colon `$:$' in a subscript indicates selecting all indices along that dimension. If $\mathbf{p}$ is a permutation vector, then, $\mathbf{A}_{\mathbf{p}, j}$ denotes the $j$-th column of $\mathbf{A}$ with its entries permuted according to $\mathbf{p}$.
Furthermore, we let $\mathbf{e}_i$ denote the $i$-th column of an identity matrix. Finally, we represent a general subspace by $\mathcal{K}$ and use $\mathcal{K}_m(\mathbf{v},\mathbf{A})$ to denote the $m$-th Krylov subspace: 
\begin{equation*}
    \mathcal{K}_m(\mathbf{v},\mathbf{A}) := \spann\{\mathbf{v},\mathbf{A}\mathbf{v},\cdots,\mathbf{A}^{m-1}\mathbf{v}\}.
\end{equation*}

%
%

\section{Challenges in Low Precision Eigenvalue Computations}\label{sec2}
In this section, we use the subspace iteration with Rayleigh–Ritz (RR) projection as an example to identify the difficulties that contribute to the accuracy loss in low-precision eigenvalue computations. 

Subspace iteration is widely used to approximate the dominant eigenpairs of a matrix $\mathbf{A} \in \mathbb{R}^{n \times n}$. This algorithm, akin to a `block' version of the power method, begins with a randomly chosen initial set of vectors $\mathbf{X}_0 \in \mathbb{R}^{n \times k}$. Each iteration applies a power of $\mathbf{A}$ to $\mathbf{X_0}$ 
\begin{equation} \mathbf{X}_{iter} = \mathbf{A}^{iter}\mathbf{X}_0,\end{equation}
where the power $iter$ is typically fixed a priori by the user or determined 
dynamically.  
To ensure numerical stability, column scaling should follow each matrix-vector multiplication to prevent overflow or underflow. In addition, the QR algorithm is typically applied to $X_{iter}$
to preserve linear independence among the vectors. See Algorithm~\ref{alg:multistep subspace iteration} for a summary of this procedure.

\begin{algorithm}[htbp]
\caption{\textit{Multiple Step Subspace Iteration}}\label{alg:multistep subspace iteration}
\begin{algorithmic}[1]
\STATE$\triangleright$ \textbf{input:} $\mathbf{A}\in \mathbb{R}^{n\times n}$, $k$, $m$, and $iter$
\STATE$\triangleright$ \textbf{output:} $\mathbf{X_{0}}$ \hfill
\STATE$\triangleright$ Generate a set of random vectors $\mathbf{X}_0 \in \mathbb{R}^{n\times k}$
\FOR{$i=1:m$}
    \STATE$\triangleright$ Compute $\mathbf{X}_{iter} = \mathbf{A}^{iter} \mathbf{X}_0$
    \STATE$\triangleright$ Perform QR factorization $\mathbf{X}_{iter} = \mathbf{Q}\mathbf{R}$  \label{alg:multistep subspace iteration line qr}
    \STATE$\triangleright$ Set $\mathbf{X}_0=\mathbf{Q}$
    \STATE$\triangleright$ Update $iter$
\ENDFOR
\end{algorithmic}
\end{algorithm}

{
Algorithm~\ref{alg:multistep subspace iteration} generates an orthonormal basis $\mathbf{Q}$ intended to approximate the dominant invariant subspace of $\mathbf{A}$. It is worth noting that alternative approaches exist for approximating a few
eigenvalues and vectors of a
matrix. Most prominent among these is  the family of Krylov subspace methods (e.g., the Lanczos algorithm for symmetric $\mathbf{A}$ or the Arnoldi process for non-symmetric $\mathbf{A}$). Krylov methods are usually faster for such tasks,  see Section~\ref{sec:krylov}, 
but subspace iteration has a number of other advantages when the goal is to compute an invariant subspace, e.g.,
in electronic structure calculations \cite{chebfsi}.}


Algorithm \ref{alg:multistep subspace iteration} is always used in conjunction with  RR projection step (Algorithm \ref{alg:rayleigh-ritz eigenvalue}).  This modification involves updating $\mathbf{X}_0$ in Line~7 of Algorithm \ref{alg:multistep subspace iteration} using the output $\tilde{\mathbf{U}}$ from Algorithm \ref{alg:rayleigh-ritz eigenvalue}. The inputs for Algorithm \ref{alg:rayleigh-ritz eigenvalue} are the matrix $\mathbf{A}$ and the matrix $\mathbf{Q}$, which is generated in Line~6 of Algorithm \ref{alg:multistep subspace iteration}.

\begin{algorithm}[htbp]
\caption{\textit{Rayleigh-Ritz Projection with Orthogonal Bases}}\label{alg:rayleigh-ritz eigenvalue}
\begin{algorithmic}[1]
\STATE$\triangleright$ \textbf{input:} $\mathbf{A}\in \mathbb{R}^{n\times n}$, $\mathbf{Q}\in \mathbb{R}^{n\times k}$ with orthonormal columns
\STATE$\triangleright$  \textbf{output:} $\tilde{\mathbf{\Lambda}}$ and $\tilde{\mathbf{U}}$ \hfill \COMMENT{Approximate eigenvalues and Schur vectors}
\STATE$\triangleright$ Compute $\mathbf{B} = \mathbf{Q}^{\top}\mathbf{A}\mathbf{Q}$\label{alg:rayleigh-ritz eigenvalue eigenvalue}
\STATE$\triangleright$ Compute Schur decomposition $\mathbf{B}\mathbf{Y} = \mathbf{Y}\tilde{\mathbf{\Lambda}}$
\STATE$\triangleright$ Compute $\tilde{\mathbf{U}} = \mathbf{Q}\mathbf{Y}$.
\end{algorithmic}
\end{algorithm}


As illustrated in Algorithms \ref{alg:multistep subspace iteration} and \ref{alg:rayleigh-ritz eigenvalue}, subspace iteration with RR projection primarily relies on two fundamental linear algebra operations: MatVecs and vector orthogonalization. In reduced-precision environments, maintaining the orthonormality of $\mathbf{Q}$ as required by classical Rayleigh-Ritz projection presents significant challenges. To investigate the impact of orthogonality loss on eigenvalue approximations, we conducted a series of experiments using Gaussian kernel matrices defined by 
\begin{equation}
    \mathbf{A}_{ij}= f(\exp(-\Vert\mathbf{x}_i-\mathbf{x}_j\Vert_2^2 / (2l^2))+s\delta_{ij}),
    \label{eq:guassiankernel}
\end{equation}
where $\mathbf{x}_i$ and $\mathbf{x}_j$ in $\mathbb{R}^d$ are the $i$-th and $j$-th data points, respectively, from a dataset $\mathbf{D} \in \mathbb{R}^{n \times d}$. Here, $f$ represents the scale parameter, $l$ represents the length scale parameter, $s$ is the variance parameter, and $\delta_{ij}$ is a Kronecker delta function that is $1$ when $i = j$ and $0$ otherwise. We uniformly sampled $1000$ data points from a square area with side length $\sqrt{1000}$, setting $f=1$, $l=10$, and $s=0.01$ to generate a test matrix $\mathbf{A}$. 

We then performed subspace iteration with RR projection using  $k=40$, $m=3$, and $iter=2$ and the Modified Gram-Schmidt QR factorization in Line~6 of Algorithm \ref{alg:multistep subspace iteration}. We examined various precision configurations: \texttt{double} (double-precision floating-point, \texttt{FP64}), \texttt{single} (single-precision floating-point, \texttt{FP32}), and \texttt{half} (half-precision floating-point, \texttt{FP16}). {Note that all computations for \texttt{FP16} are done in \texttt{FP32} and the results are then truncated to \texttt{FP16}. This setting is common since it reflects the behavior of many optimized routines, such as certain \texttt{cuBLAS} functions (like \texttt{cublasDotEx}), which utilize \texttt{FP32} for internal accumulations.} Precision settings for the two operations, MatVecs and QR factorization, are customized for each experiment and designated by labels such as \texttt{[MatVec Precision]-[QR Precision]}. For example, in the \texttt{single-double} configuration, MatVec operations are carried out in single precision while QR factorization is done in double precision. We always use double precision to solve the projected eigenvalue problem in Line~\ref{alg:rayleigh-ritz eigenvalue eigenvalue} of Algorithm \ref{alg:rayleigh-ritz eigenvalue}.

Figure~\ref{fig:test_00} reports the relative errors for the 20 largest eigenvalues under various precision configurations. As anticipated, the configurations with double/single precision MatVecs achieve errors close to the machine epsilon for \texttt{FP64}/\texttt{FP32}, confirming that employing high precision in both MatVecs and QR operations effectively minimizes numerical inaccuracies. 
In contrast, the \texttt{half-half} configurations exhibit substantially larger errors, highlighting the inherent challenges of relying exclusively on half precision.
Comparisons among the \texttt{half-double}, \texttt{half-single}, and \texttt{half-half} configurations suggest that lowering the precision of MatVec operations to half precision alone does not significantly compromise the overall accuracy, as long as the QR factorization is performed in higher precision. 
For example, the approximation accuracy of the \texttt{half-double} configuration surpasses that of full half precision and the error is smaller than $10^{-4}$. 
A similar trend holds for the \texttt{half-single} configuration. 

{Figure \ref{fig:test_00} shows that even when MatVecs are carried out in low precision, the dominant spectral subspace can still be reconstructed accurately once the resulting vectors are post-processed. Because high-precision MatVecs (or the high-precision storage they require) impose heavy penalties in memory traffic and run-time, large-scale solvers already lean toward reduced precision for these operations. The evidence in Figure \ref{fig:test_00} therefore motivates embedding low- or mixed-precision arithmetic not only in the MatVecs but throughout the basis-generation and projection stages. Our goal is to capture the speed and memory advantages of low precision while reserving full precision for the small, projected problem so that the eigenvalue approximation accuracy is not compromised much. The Orthogonalization-Free Rayleigh–Ritz Projection framework, introduced next, is built precisely for this purpose.
}

\begin{figure}[htbp]
     \centering
     \centering
     \includegraphics[width=0.95\textwidth]{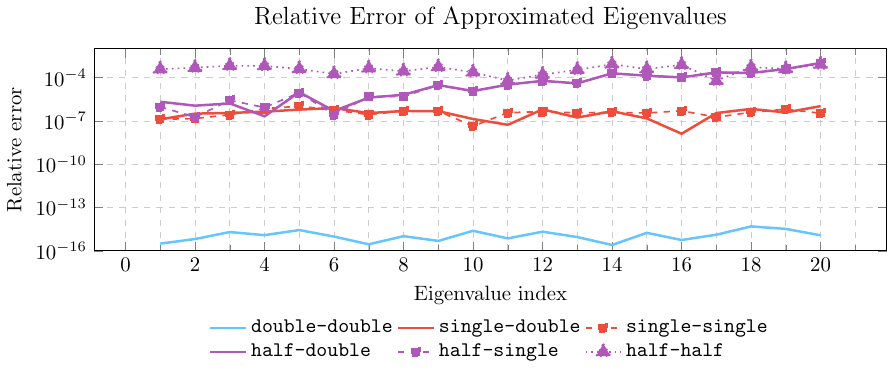}
     \caption{ 
     Relative error plot of subspace iteration with Rayleigh-Ritz projection under different precision options. The test matrix is a Gaussian kernel matrix of size $1000\times 1000$. A concise naming convention is used to denote different options: \texttt{[MatVec Precision]-[QR Precision]}. }
     \label{fig:test_00}
\end{figure}

\section{Orthogonalization-Free Rayleigh-Ritz Projection}\label{sec3}
In this section, we introduce a generalization of the Rayleigh-Ritz projection designed specifically for half-precision and lower. This variant, which does not require an orthogonal input basis, is referred to as the Orthogonalization-Free Rayleigh-Ritz (OFRR) projection method.

\subsection{OFRR for Eigenvalue Problems}\label{sec3-1}

We first consider the eigenvalue problem
\begin{equation}
    \mathbf{A}\mathbf{u} = \lambda \mathbf{u},
\end{equation}
where $\mathbf{A}\in\mathbb{R}^{n\times n}$.
Given a $k$-dimensional subspace $\mathcal{K}$, the orthogonal projection method seeks approximate eigenpairs $\tilde{\lambda}\in \mathbb{C}$, $\tilde{\mathbf{u}} \in\mathcal{K}$ of $\mathbf{A}$ such that the following Galerkin condition is satisfied:
\begin{equation}
\mathbf{A} \tilde{\mathbf{u}} - \lambda	\tilde{\mathbf{u}} \perp \mathcal{K},
\end{equation}
or, equivalently,
\begin{equation}\label{eq:galerkin}
\langle\mathbf{A}\tilde{\mathbf{u}} - \tilde{\lambda}\tilde{\mathbf{u}},\mathbf{u}\rangle = 0, \qquad \forall \mathbf{u} \in \mathcal{K}.
\end{equation}

The standard Rayleigh-Ritz process discussed in Algorithm~\ref{alg:rayleigh-ritz eigenvalue} assumes that an orthonormal basis of $\mathcal{K}$ is available. Now, assume that we only have a linearly independent basis
$\mathbf{U}=[\mathbf{u}_1,\mathbf{u}_2,\cdots,\mathbf{u}_k]$ of $\mathcal{K}$ (which might not be orthogonal). Then the Galerkin condition leads to the following equations:
\begin{equation*}
    \langle\mathbf{A}\tilde{\mathbf{u}} - \tilde{\lambda}\tilde{\mathbf{u}},\mathbf{u}_i\rangle = 0, \qquad i = 1,\cdots, k.
\end{equation*}

Since the approximate solution $\tilde{\mathbf{u}}$ is sought within the subspace $\mathcal{K}$, it can be represented as $\mathbf{U}\mathbf{y}$, with $\mathbf{y}$ being a unique vector in $\mathbb{C}^{k}$. Accordingly, we can transform \eqref{eq:galerkin} into a set of equations involving $\tilde\lambda$ and $\mathbf{y}$:
\begin{equation*}
    \langle\mathbf{A}\mathbf{U}\mathbf{y} - \tilde{\lambda}\mathbf{U}\mathbf{y},\mathbf{u}_i\rangle = 0, \qquad i = 1,\cdots, k.
\end{equation*}
which is equivalent to:
\begin{equation}
\mathbf{U}^{\ast}\mathbf{A}\mathbf{U}\mathbf{y} = \tilde{\lambda}\mathbf{U}^{\ast}\mathbf{U}\mathbf{y}.
\label{eq:geig}
\end{equation} 

This approach, detailed in Algorithm~\ref{alg:of rayleigh-ritz eigenvalue}, shifts from the traditional eigenvalue problem to a generalized one when $\mathbf{U}^{\ast}\mathbf{U} \neq \mathbf{I}$. The effectiveness of this method is closely linked to the condition number of $\mathbf{U}^{\ast}\mathbf{U}$.  We will explore methods for constructing well-conditioned non-orthogonal bases in Section~\ref{sec4}.

\begin{algorithm}[htbp]
\caption{\textit{Orthogonalization-Free Rayleigh-Ritz Projection}}\label{alg:of rayleigh-ritz eigenvalue}
\begin{algorithmic}[1]
\STATE$\triangleright$ \textbf{input:} $\mathbf{A}$, $\mathbf{U}\in \mathbb{C}^{n\times k}$
\STATE$\triangleright$ \textbf{output:} $\tilde{\mathbf{\Lambda}}$ and $\tilde{\mathbf{U}}$ \hfill
\STATE$\triangleright$ Compute $\mathbf{B} = \mathbf{U}^{\ast}\mathbf{A}\mathbf{U}$
\STATE$\triangleright$ Compute $\mathbf{M} = \mathbf{U}^{\ast}\mathbf{U}$
\STATE$\triangleright$ Compute eigendecomposition $\mathbf{B}\mathbf{Y} = \mathbf{M}\mathbf{Y}\tilde{\mathbf{\Lambda}}$
\STATE$\triangleright$ Compute $\tilde{\mathbf{U}} = \mathbf{U}\mathbf{Y}$.
\end{algorithmic}
\end{algorithm}

\subsection{OFRR for Singular Value Decomposition}\label{sec3-2}
In this section, we extend the OFRR method to Singular Value Decomposition (SVD).
We first describe the orthogonal Rayleigh-Ritz projection for SVD.
Consider the following SVD
\begin{equation}\label{eq:svd}
\begin{cases}
    \mathbf{A}\mathbf{v}&=\sigma\mathbf{u} \\
    \mathbf{A}^{\top}\mathbf{u}&=\sigma\mathbf{v}
\end{cases}
\end{equation}
where $\mathbf{A}\in\mathbb{R}^{n_1\times n_2}$, $\mathbf{v}$ is the right singular vector and $\mathbf{u}$ is the left singular vector.
Given two subspaces $\mathcal{K}_1$ of dimension $k_1$ and $\mathcal{K}_2$ of dimension $k_2$, the Rayleigh-Ritz projection for SVD seeks  $\tilde{\sigma}\in\mathbb{R}$, $\tilde{\mathbf{u}}\in\mathcal{K}_1$, and $\tilde{\mathbf{v}}\in\mathcal{K}_2$ such that
\begin{equation}\label{eq:galerkin svd}
\begin{cases}
    \langle\mathbf{A}\tilde{\mathbf{v}} - \tilde{\sigma}\tilde{\mathbf{u}}, \mathbf{u}\rangle = 0, &\qquad \forall \mathbf{u} \in \mathcal{K}_1,\\
    \langle\mathbf{A}^{\top}\tilde{\mathbf{u}} - \tilde{\sigma}\tilde{\mathbf{v}}, \mathbf{v}\rangle = 0, &\qquad \forall \mathbf{v} \in \mathcal{K}_2.
\end{cases}
\end{equation}

When we have an orthonormal basis $\mathbf{U}=[\mathbf{u}_1,\mathbf{u}_2,\ldots,\mathbf{u}_{k_1}]$ for $\mathcal{K}_1$ and an orthonormal basis $\mathbf{V}=[\mathbf{v}_1,\mathbf{v}_2,\ldots,\mathbf{v}_{k_2}]$ for $\mathcal{K}_2$, 
the classical Rayleigh-Ritz projection simply uses the SVD of $\mathbf{U}^{\top}\mathbf{A}\mathbf{V}$ to generate approximate singular values and singular vectors, as shown in Algorithm~\ref{alg:rayleigh-ritz svd}. 

\begin{algorithm}[htbp]
\caption{\textit{Rayleigh-Ritz Projection for SVD}}\label{alg:rayleigh-ritz svd}
\begin{algorithmic}[1]
\STATE$\triangleright$ \textbf{inputs:} $\mathbf{A}\in \mathbb{R}^{{n_1}\times {n_2}}$,  and $\mathbf{U}\in \mathbb{R}^{n_1\times k_1}$, $\mathbf{V}\in \mathbb{R}^{n_2\times k_2}$, both with orthonormal columns.
\STATE$\triangleright$ \textbf{output:} $\tilde{\mathbf{S}}$, $\tilde{\mathbf{U}}$, and $\tilde{\mathbf{V}}$ \hfill\COMMENT{\%Approximate singular values and singular vectors}
\STATE$\triangleright$ Compute $\mathbf{B} = \mathbf{U}^{\top}\mathbf{A}\mathbf{V}$
\STATE$\triangleright$ Compute SVD $\mathbf{B} = \mathbf{Z}\tilde{\mathbf{S}}\mathbf{W}^{\top}$
\STATE$\triangleright$ Compute $\tilde{\mathbf{U}} = \mathbf{U}\mathbf{Z}$, $\tilde{\mathbf{V}} = \mathbf{V}\mathbf{W}$
\end{algorithmic}
\end{algorithm}

Now assume that only linearly independent bases are available for $\mathcal{K}_1$ and $\mathcal{K}_2$ instead of orthonormal ones.
For the Galerkin condition in \eqref{eq:galerkin svd} to be satisfied, we need to impose the following equations:
\begin{equation*}
    \begin{cases}
        \langle\mathbf{A}\tilde{\mathbf{v}} - \tilde{\sigma}\tilde{\mathbf{u}},\mathbf{u}_i\rangle = 0, &\qquad i = 1,\cdots, k_1,\\
        \langle\mathbf{A}^{\top}\tilde{\mathbf{u}} - \tilde{\sigma}\tilde{\mathbf{v}},\mathbf{v}_i\rangle = 0, &\qquad i = 1,\cdots, k_2.
    \end{cases}
\end{equation*}

We can again express any vector $\tilde{\mathbf{u}}\in\mathcal{K}_1$ as $\tilde{\mathbf{u}} = \mathbf{U}\mathbf{y}$ with a unique $\mathbf{y}\in\mathbb{R}^{k_1}$ and $\tilde{\mathbf{v}}\in\mathcal{K}_2$ as $\tilde{\mathbf{v}} = \mathbf{V}\mathbf{z}$ with a unique $\mathbf{z}\in\mathbb{R}^{k_2}$.
We then transform the original problem into the following system of equations in terms of $\tilde{\sigma}$, $\mathbf{y}$, and $\mathbf{z}$
\begin{equation*}
    \begin{cases}
        \langle\mathbf{A}\mathbf{V}\mathbf{z} - \tilde{\sigma}\mathbf{U}\mathbf{y},\mathbf{u}_i\rangle = 0, &\qquad i = 1,\cdots, k_1,\\
        \langle\mathbf{A}^{\top}\mathbf{U}\mathbf{y} - \tilde{\sigma}\mathbf{V}\mathbf{z},\mathbf{v}_i\rangle = 0, &\qquad i = 1,\cdots, k_2,
    \end{cases}
\end{equation*}
which is equivalent to:
\begin{equation}\label{eq:gev for svd}
    \begin{cases}
        \mathbf{U}^{\top}\mathbf{A}\mathbf{V}\mathbf{z} = \tilde{\sigma}\mathbf{U}^{\top}\mathbf{U}\mathbf{y},\\
        (\mathbf{U}^{\top}\mathbf{A}\mathbf{V})^{\top}\mathbf{y} = \tilde{\sigma}\mathbf{V}^{\top}\mathbf{V}\mathbf{z}.
    \end{cases}
\end{equation}

The above system of equations can be reformulated as a block $2$-by-$2$ generalized eigenvalue problem:
\begin{equation}\label{eq:gev for svd matrix}
\begin{bmatrix}
     0 & \mathbf{U}^{\top}\mathbf{A}\mathbf{V} \\ 
    (\mathbf{U}^{\top}\mathbf{A}\mathbf{V})^{\top} & 0
\end{bmatrix}
\begin{bmatrix} \mathbf{y} \\ \mathbf{z}  \end{bmatrix} 
= \tilde \sigma
\begin{bmatrix}
    \mathbf{U}^{\top}\mathbf{U} & 0 \\ 
     0 & \mathbf{V}^{\top}\mathbf{V} 
\end{bmatrix}
\begin{bmatrix} \mathbf{y} \\ \mathbf{z}  \end{bmatrix}.
\end{equation}

When both $\mathbf{U}^{\top}\mathbf{U}$ and $\mathbf{V}^{\top}\mathbf{V}$ are identity matrices, i.e., the bases are orthonormal, this method is equivalent to the orthogonal Rayleigh-Ritz projection described in Algorithm~\ref{alg:rayleigh-ritz svd}.

It is straightforward to see that if $[\mathbf{y}^{\top},\mathbf{z}^{\top}]^{\top}$ is an eigenvector of the generalized eigenvalue problem \eqref{eq:gev for svd matrix} associated with a positive eigenvalue $\tilde \sigma$, then $[-\mathbf{y}^{\top},\mathbf{z}^{\top}]^{\top}$ is an eigenvector associated with $-\tilde \sigma$. Additionally, all other eigenvalues are zero. Therefore, the positive eigenvalues of \eqref{eq:gev for svd matrix} correspond exactly to the singular values that are sought.

The remaining task is to determine the approximate orthonormal singular vectors. The following theorem illustrates how these singular vectors can be constructed from the eigenvectors of \eqref{eq:gev for svd matrix}.

\begin{theorem}
Assume the columns of $[\mathbf{Y}^{\top}, \mathbf{Z}^{\top}]^{\top}$ contain all the eigenvectors associated with the positive eigenvalues of \eqref{eq:gev for svd matrix} and the corresponding eigenvalues are stored in the diagonal matrix $\tilde{\mathbf{S}}$, such that
\begin{equation*}
\begin{bmatrix}
     0 & \mathbf{U}^{\top}\mathbf{A}\mathbf{V} \\ 
    (\mathbf{U}^{\top}\mathbf{A}\mathbf{V})^{\top} & 0
\end{bmatrix}
\begin{bmatrix} \mathbf{Y} \\ \mathbf{Z}  \end{bmatrix} 
= 
\begin{bmatrix}
    \mathbf{U}^{\top}\mathbf{U} & 0 \\ 
     0 & \mathbf{V}^{\top}\mathbf{V} 
\end{bmatrix}
\begin{bmatrix} \mathbf{Y} \\ \mathbf{Z}  \end{bmatrix}\tilde{\mathbf{S}}.
\end{equation*}
Then, the columns of $\tilde{\mathbf{U}} = \sqrt{2}\mathbf{U}\mathbf{Y}$ and $\tilde{\mathbf{V}} = \sqrt{2}\mathbf{V}\mathbf{Z}$ are orthonormal.
\end{theorem}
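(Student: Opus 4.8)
The plan is to exploit two structural facts about the generalized eigenvalue problem \eqref{eq:gev for svd matrix}: that the pencil it defines is symmetric-definite, and that it carries the $\pm$ sign symmetry already noted in the paragraph just before the theorem. Denote by $\mathbf{K}$ and $\mathbf{N}$ the block $2\times 2$ matrices appearing on its left- and right-hand sides, put $\mathbf{M}_U=\mathbf{U}^{\top}\mathbf{U}$, $\mathbf{M}_V=\mathbf{V}^{\top}\mathbf{V}$, and $\mathbf{B}=\mathbf{U}^{\top}\mathbf{A}\mathbf{V}$. Since the bases $\mathbf{U}$ and $\mathbf{V}$ are linearly independent, $\mathbf{N}$ is symmetric positive definite and $\mathbf{K}$ is symmetric, so $(\mathbf{K},\mathbf{N})$ is a real symmetric-definite pencil: its eigenvalues are real, and its eigenvectors can be taken $\mathbf{N}$-orthonormal, with eigenvectors of distinct eigenvalues automatically $\mathbf{N}$-orthogonal. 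I would first fix this (standard, and implicitly assumed) normalization for the columns $\mathbf{w}_i=[\mathbf{y}_i^{\top},\mathbf{z}_i^{\top}]^{\top}$ of $[\mathbf{Y}^{\top},\mathbf{Z}^{\top}]^{\top}$, so that $\mathbf{w}_i^{\top}\mathbf{N}\mathbf{w}_j=\delta_{ij}$, i.e.\ $\mathbf{y}_i^{\top}\mathbf{M}_U\mathbf{y}_j+\mathbf{z}_i^{\top}\mathbf{M}_V\mathbf{z}_j=\delta_{ij}$.

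Next I would invoke the sign symmetry: for each positive eigenvalue $\tilde\sigma_i$ with eigenvector $\mathbf{w}_i$, the vector $\bar{\mathbf{w}}_i:=[-\mathbf{y}_i^{\top},\mathbf{z}_i^{\top}]^{\top}$ is an eigenvector of \eqref{eq:gev for svd matrix} for the eigenvalue $-\tilde\sigma_i<0$. Because $\tilde\sigma_i>0>-\tilde\sigma_j$, the eigenvalues $\tilde\sigma_i$ and $-\tilde\sigma_j$ are distinct for every pair $i,j$, so the symmetric-definite structure forces $\mathbf{w}_i^{\top}\mathbf{N}\bar{\mathbf{w}}_j=0$, i.e.\ $-\mathbf{y}_i^{\top}\mathbf{M}_U\mathbf{y}_j+\mathbf{z}_i^{\top}\mathbf{M}_V\mathbf{z}_j=0$. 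Adding and subtracting this from $\mathbf{y}_i^{\top}\mathbf{M}_U\mathbf{y}_j+\mathbf{z}_i^{\top}\mathbf{M}_V\mathbf{z}_j=\delta_{ij}$ gives $\mathbf{y}_i^{\top}\mathbf{M}_U\mathbf{y}_j=\tfrac12\delta_{ij}$ and $\mathbf{z}_i^{\top}\mathbf{M}_V\mathbf{z}_j=\tfrac12\delta_{ij}$, that is $\mathbf{Y}^{\top}\mathbf{M}_U\mathbf{Y}=\tfrac12\mathbf{I}$ and $\mathbf{Z}^{\top}\mathbf{M}_V\mathbf{Z}=\tfrac12\mathbf{I}$. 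The conclusion is then immediate: $\tilde{\mathbf{U}}^{\top}\tilde{\mathbf{U}}=2\mathbf{Y}^{\top}\mathbf{U}^{\top}\mathbf{U}\mathbf{Y}=2\mathbf{Y}^{\top}\mathbf{M}_U\mathbf{Y}=\mathbf{I}$, and similarly $\tilde{\mathbf{V}}^{\top}\tilde{\mathbf{V}}=\mathbf{I}$ (full column rank of $\mathbf{Y}$ and $\mathbf{Z}$, if one wants it, is a byproduct of these identities).

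The main obstacle is the normalization step: one has to justify that the positive-eigenvalue eigenvectors admit an $\mathbf{N}$-orthonormal choice and that their sign-flipped partners then form an $\mathbf{N}$-orthonormal set spanning the negative-eigenvalue subspace, so that the cross products $\mathbf{w}_i^{\top}\mathbf{N}\bar{\mathbf{w}}_j$ genuinely vanish — this rests on the simultaneous-diagonalization theory for symmetric-definite pencils and on orthonormalizing within eigenspaces when some $\tilde\sigma_i$ coincide. A more self-contained alternative that sidesteps that theory is to left-multiply the two block relations $\mathbf{B}\mathbf{Z}=\mathbf{M}_U\mathbf{Y}\tilde{\mathbf{S}}$ and $\mathbf{B}^{\top}\mathbf{Y}=\mathbf{M}_V\mathbf{Z}\tilde{\mathbf{S}}$ by $\mathbf{Y}^{\top}$ and $\mathbf{Z}^{\top}$, which yields $\mathbf{P}\tilde{\mathbf{S}}=\tilde{\mathbf{S}}\mathbf{Q}$ with $\mathbf{P}=\mathbf{Y}^{\top}\mathbf{M}_U\mathbf{Y}$ and $\mathbf{Q}=\mathbf{Z}^{\top}\mathbf{M}_V\mathbf{Z}$; transposing gives $\mathbf{Q}\tilde{\mathbf{S}}=\tilde{\mathbf{S}}\mathbf{P}$, and subtracting forces $(\tilde\sigma_i+\tilde\sigma_j)(\mathbf{P}-\mathbf{Q})_{ij}=0$, hence $\mathbf{P}=\mathbf{Q}$ since every $\tilde\sigma_i>0$; combined with the normalization $\mathbf{P}+\mathbf{Q}=\mathbf{I}$ this again gives $\mathbf{P}=\mathbf{Q}=\tfrac12\mathbf{I}$.
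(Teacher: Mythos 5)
Your proposal is correct, and it is actually \emph{two} correct proofs; it is worth distinguishing how each relates to the paper's argument. Your second, ``self-contained'' alternative is essentially the paper's proof in matrix form: left-multiplying the two block rows by $\mathbf{Y}^{\top}$ and $\mathbf{Z}^{\top}$ and taking a transpose yields the identity $\tilde\sigma_j\mathbf{P}_{ij}=\tilde\sigma_i\mathbf{Q}_{ij}$, which is exactly the paper's relation \eqref{eq:thm eq 4}, and the positivity of $\tilde\sigma_i+\tilde\sigma_j$ is exploited in the same way. The difference is purely organizational: by packaging the identity as $\mathbf{P}\tilde{\mathbf{S}}=\tilde{\mathbf{S}}\mathbf{Q}$ you obtain $\mathbf{P}=\mathbf{Q}$ in one stroke from the eigenvalue equations alone, and invoke the $\mathbf{N}$-orthonormality $\mathbf{P}+\mathbf{Q}=\mathbf{I}$ just once at the end, whereas the paper splits into an off-diagonal case (where $\mathbf{N}$-orthogonality gives $\mathbf{P}_{ij}=-\mathbf{Q}_{ij}$) and a diagonal case (where normalization gives $\mathbf{P}_{ii}+\mathbf{Q}_{ii}=1$). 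Your version is tighter and avoids the case split, but it is not a different argument in substance. Your first alternative, by contrast, \emph{is} a genuinely different route: rather than pairing the eigenvalue equations with $\mathbf{N}$-orthogonality among the positive-eigenvalue eigenvectors, it pairs the $\mathbf{N}$-orthonormality of the positive block with $\mathbf{N}$-orthogonality against the sign-flipped negative block, using the fact that a positive and a negative eigenvalue can never coincide. This bypasses the paper's eigenvalue-equation manipulations entirely at the cost of invoking the $\pm$ symmetry and the symmetric-definite-pencil orthogonality theorem. Finally, you are right to flag the $\mathbf{N}$-orthonormal normalization of the chosen eigenvectors as the one standing implicit hypothesis; the paper uses it in \eqref{eq:thm eq 1} and \eqref{eq:thm eq 5} without comment, and both of your arguments require exactly the same thing, so you are not assuming more than the paper does.
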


\begin{proof}
According to the theory of the generalized eigenvalue problem, the eigenvectors of different eigenvalues are orthogonal under the A-inner product defined by the mass matrix. For the two different eigenpairs $(\tilde\sigma_i;[\mathbf{y}_i^{\top},\mathbf{z}_i^{\top}]^{\top})$ and $(\tilde\sigma_j;[\mathbf{y}_j^{\top},\mathbf{z}_j^{\top}]^{\top})$ of \eqref{eq:gev for svd matrix} where $i\neq j$, we have:
\begin{equation}\label{eq:thm eq 1}
\begin{bmatrix} \mathbf{y}_j^{\top} & \mathbf{z}_j^{\top} \end{bmatrix}
\begin{bmatrix} \mathbf{U}^{\top} \mathbf{U} & 0\\ 0 & \mathbf{V}^{\top} \mathbf{V} \end{bmatrix}
\begin{bmatrix} \mathbf{y}_i\\\mathbf{z}_i\end{bmatrix} = 0\Rightarrow \mathbf{y}_j^{\top} \mathbf{U}^{\top} \mathbf{U} \mathbf{y}_i = - \mathbf{z}_j^{\top} \mathbf{V}^{\top} \mathbf{V} \mathbf{z}_i.
\end{equation}

Since $(\tilde\sigma_i;[\mathbf{y}_i^{\top},\mathbf{z}_i^{\top}]^{\top})$ is an eigenpair of \eqref{eq:gev for svd matrix}, we also have
\begin{equation}\label{eq:thm eq 2}
    \mathbf{V}^{\top}\mathbf{A}^{\top}\mathbf{U}\mathbf{y}_i = \tilde{\sigma}_i\mathbf{V}^{\top}\mathbf{V}\mathbf{z}_i\Rightarrow \mathbf{z}_j^{\top}\mathbf{V}^{\top}\mathbf{A}^{\top}\mathbf{U}\mathbf{y}_i = \tilde{\sigma}_i\mathbf{z}_j^{\top}\mathbf{V}^{\top}\mathbf{V}\mathbf{z}_i.
\end{equation}

Similarly, we know that 
\begin{equation}\label{eq:thm eq 3}
    \mathbf{y}_i^{\top}\mathbf{U}^{\top}\mathbf{A}\mathbf{V}\mathbf{z}_j=\tilde{\sigma}_j\mathbf{y}_i^{\top}\mathbf{U}^{\top}\mathbf{U}\mathbf{y}_j = \tilde{\sigma}_j\mathbf{y}_j^{\top}\mathbf{U}^{\top}\mathbf{U}\mathbf{y}_i.
\end{equation}

Given that $\mathbf{z}_j^{\top}\mathbf{V}^{\top}\mathbf{A}^{\top}\mathbf{U}\mathbf{y}_i = \mathbf{y}_i^{\top}\mathbf{U}^{\top}\mathbf{A}\mathbf{V}\mathbf{z}_j$, we can combine \eqref{eq:thm eq 2} and \eqref{eq:thm eq 3} and obtain
\begin{equation}\label{eq:thm eq 4}
    \tilde{\sigma}_i\mathbf{z}_j^{\top}\mathbf{V}^{\top}\mathbf{V}\mathbf{z}_i = \tilde{\sigma}_j\mathbf{y}_j^{\top}\mathbf{U}^{\top}\mathbf{U}\mathbf{y}_i.
\end{equation}
By integrating \eqref{eq:thm eq 1} with \eqref{eq:thm eq 4}, we obtain
\begin{equation*}
    \tilde{\sigma}_i\mathbf{z}_j^{\top}\mathbf{V}^{\top}\mathbf{V}\mathbf{z}_i = \tilde{\sigma}_j\mathbf{y}_j^{\top}\mathbf{U}^{\top}\mathbf{U}\mathbf{y}_i = - \tilde{\sigma}_j\mathbf{z}_j^{\top}\mathbf{V}^{\top}\mathbf{V}\mathbf{z}_i \Rightarrow (\tilde{\sigma}_i+\tilde{\sigma}_j)\mathbf{z}_j^{\top}\mathbf{V}^{\top}\mathbf{V}\mathbf{z}_i = 0,
\end{equation*}
where the second equal sign is due to \eqref{eq:thm eq 1}.
Since both $\tilde{\sigma}_i$ and $\tilde{\sigma}_j$ are positive, $\tilde{\sigma}_i+\tilde{\sigma}_j\neq0$ which implies $\mathbf{z}_j^{\top}\mathbf{V}^{\top}\mathbf{V}\mathbf{z}_i=0$.
We can show a similar property for $\mathbf{y}$ as
\begin{equation}\label{eq:thm conclude 1}
     \mathbf{y}_j^{\top}\mathbf{U}^{\top}\mathbf{U}\mathbf{y}_i = \frac{\tilde{\sigma}_i}{\tilde{\sigma}_j}\mathbf{z}_j^{\top}\mathbf{V}^{\top}\mathbf{V}\mathbf{z}_i = 0.
\end{equation}

Next, we discuss the situation when $i=j$.
In this case, we have: 
\begin{equation}\label{eq:thm eq 5}
\begin{bmatrix} \mathbf{y}_i^{\top} & \mathbf{z}_i^{\top} \end{bmatrix}
\begin{bmatrix} \mathbf{U}^{\top} \mathbf{U} & 0\\ 0 & \mathbf{V}^{\top} \mathbf{V} \end{bmatrix}
\begin{bmatrix} \mathbf{y}_i\\\mathbf{z}_i\end{bmatrix} = 1\Rightarrow \mathbf{y}_i^{\top} \mathbf{U}^{\top} \mathbf{U} \mathbf{y}_i + \mathbf{z}_i^{\top} \mathbf{V}^{\top} \mathbf{V} \mathbf{z}_i = 1.
\end{equation}
Note that obtaining \eqref{eq:thm eq 4} does not require $i\neq j$, so we also have
\begin{equation}\label{eq:thm eq 6}
    \tilde{\sigma}_i\mathbf{z}_i^{\top}\mathbf{V}^{\top}\mathbf{V}\mathbf{z}_i = \tilde{\sigma}_i\mathbf{y}_i^{\top}\mathbf{U}^{\top}\mathbf{U}\mathbf{y}_i\Rightarrow
    \mathbf{z}_i^{\top}\mathbf{V}^{\top}\mathbf{V}\mathbf{z}_i = \mathbf{y}_i^{\top}\mathbf{U}^{\top}\mathbf{U}\mathbf{y}_i.
\end{equation}
Combining \eqref{eq:thm eq 5} and \eqref{eq:thm eq 6}, we have
\begin{equation}\label{eq:thm conclude 2}
    \mathbf{y}_i^{\top} \mathbf{U}^{\top} \mathbf{U} \mathbf{y}_i = \mathbf{z}_i^{\top} \mathbf{V}^{\top} \mathbf{V} \mathbf{z}_i=1/2
\end{equation}

From the results in \eqref{eq:thm conclude 1} and in \eqref{eq:thm conclude 2}, it is obvious that $\tilde{\mathbf{U}} = \sqrt{2}\mathbf{U}\mathbf{Y}$ and $\tilde{\mathbf{V}} = \sqrt{2}\mathbf{V}\mathbf{Z}$ are orthonormal.
\end{proof}

We conclude this section by summarizing the final algorithm in Algorithm \ref{alg:of rayleigh-ritz svd}. The proposed orthogonalization‐free Rayleigh–Ritz projection requires solving a generalized eigenvalue problem of dimension $k_1 + k_2$, in contrast to the standard two‐sided Rayleigh–Ritz projection, which involves an SVD on a $k_1 \times k_2$ matrix. Nonetheless, this approach can preserve good accuracy even under loss of orthogonality in low‐precision computations.

\begin{algorithm}[htbp]
\caption{\textit{Orthogonalization-Free Rayleigh-Ritz Projection for SVD}}\label{alg:of rayleigh-ritz svd}
\begin{algorithmic}[1]
\STATE$\triangleright$ \textbf{input:} $\mathbf{A}\in \mathbb{R}^{{n_1}\times {n_2}}$, $\mathbf{U}\in \mathbb{R}^{n_1\times k_1}$, $\mathbf{V}\in \mathbb{R}^{n_2\times k_2}$
\STATE$\triangleright$ \textbf{output:} $\tilde{\mathbf{S}}$, $\tilde{\mathbf{U}}$, and $\tilde{\mathbf{V}}$ \hfill\COMMENT{\%Approximate singular values and singular vectors}
\STATE$\triangleright$ Solve the following generalized eigenvalue problem for all positive eigenvalues
\begin{equation*}
\begin{bmatrix}
     0 & \mathbf{U}^{\top}\mathbf{A}\mathbf{V} \\ 
    (\mathbf{U}^{\top}\mathbf{A}\mathbf{V})^{\top} & 0
\end{bmatrix}
\begin{bmatrix} \mathbf{y} \\ \mathbf{z}  \end{bmatrix} 
= 
\tilde{\sigma}\begin{bmatrix}
    \mathbf{U}^{\top}\mathbf{U} & 0 \\ 
     0 & \mathbf{V}^{\top}\mathbf{V} 
\end{bmatrix}
\begin{bmatrix} \mathbf{y} \\ \mathbf{z}  \end{bmatrix}
\end{equation*}
\STATE$\triangleright$ Assemble the eigenvectors associated with positive eigenvalues into the columns of matrices $\mathbf{Y}$ and $\mathbf{Z}$, and insert these eigenvalues into the diagonal of the diagonal matrix $\tilde{\mathbf{S}}$.
\STATE$\triangleright$  Compute $\tilde{\mathbf{U}} = \sqrt{2}\mathbf{U}\mathbf{Y}$, $\tilde{\mathbf{V}} = \sqrt{2}\mathbf{V}\mathbf{Z}$
\end{algorithmic}
\end{algorithm}



%
%
{
\section{Construction of Linearly Independent Bases}\label{sec4}

In the previous section, we proposed the OFRR method for eigenvalue problems and SVD. Unlike orthogonal projection methods, which require an orthogonal basis, OFRR allows for more flexibility with non-orthogonal bases. This section will focus on various strategies to enhance the linear independence of bases for effective integration with OFRR. The procedure for SVD is similar to the eigenvalue computations, with the only difference being the additional matrix-vector multiplications with $\mathbf{A}^{\top}$. For the sake of conciseness, we omit the discussion of the SVD algorithm.

\subsection{Linearly Independent Basis for Krylov Subspace Methods}\label{sec:krylov}
In this section, we will focus on generating linearly independent bases for the Krylov subspace $\mathcal{K}_{k}(\mathbf{v},\mathbf{A})$ for $\mathbf{A}\in\mathbb{R}^{n\times n}$.
Under the OFRR framework, where orthogonality is not required, the simplest approach is to directly use the matrix $\mathbf{K}:=[\mathbf{v},\mathbf{A}\mathbf{v},\cdots,\mathbf{A}^{k-1}\mathbf{v}]$ without any modification.
The generalized eigenvalue problem using OFRR would then be:
\begin{equation}
    \mathbf{K}^{\top}\mathbf{A}\mathbf{K}\mathbf{y}=\tilde{\lambda}\mathbf{K}^{\top}\mathbf{K}\mathbf{y}.
\end{equation}
However, there are several reasons why this approach is generally not recommended.

First, some columns of $\mathbf{K}$ might be nearly linearly dependent, especially when the original matrix $\mathbf{A}$ is numerically low-rank. Direct use of $\mathbf{K}$ could result in a mass matrix $\mathbf{K}^{\top}\mathbf{K}$ that is extremely ill-conditioned in this case, which adversely affects the numerical stability of the eigenvalue algorithm. Second, overflow can arise in computations especially with  reduced precision. While column scaling might be applied to normalize the infinity norm of each column of $\mathbf{K}$ to one, the magnitudes of the columns' $2$-norms can remain large. This can potentially lead to overflow when forming $\mathbf{K}^{\top}\mathbf{A}\mathbf{K}$ and $\mathbf{K}^{\top}\mathbf{K}$ in reduced-precision environments.
Therefore, it is still essential to use algorithms that avoid poorly conditioned bases.

\subsubsection{Arnoldi Method}

A straightforward approach is to employ standard methods for constructing an orthogonal basis, simply executing them using reduced precision arithmetic. For instance, the Arnoldi method – the most widely adopted technique for building an orthogonal basis of the Krylov subspace associated with a general matrix – could be applied to construct linearly independent bases.
One variant of the Arnoldi algorithm is shown in Algorithm~\ref{alg:Krylov arnoldi processs}, where Modified Gram-Schmidt (MGS) is used to build an orthogonal basis for the Krylov subspace.
{It is worth noting that when the input matrix $\mathbf{A}$ is symmetric, applying this general Arnoldi procedure becomes computationally equivalent to the Lanczos algorithm with full orthogonalization, a variant often employed for enhanced numerical stability.}
In some applications implemented using double precision, where orthogonality is critical, re-orthogonalization is typically enabled.
With re-orthogonalization, Lines~\ref{alg:Krylov arnoldi processs mgs start}--\ref{alg:Krylov arnoldi processs mgs end} in Algorithm~\ref{alg:Krylov arnoldi processs} are repeated once if the $2$-norm of $\mathbf{v}$ after projection is reduced by more than a certain tolerance.

\begin{algorithm}[htbp]
\caption{\textit{Computing orthogonal bases from the Arnoldi Process with MGS}}\label{alg:Krylov arnoldi processs}
\begin{algorithmic}[1]
\STATE \textbf{input:} $\mathbf{A}\in\mathbb{C}^{n\times n}$, $\mathbf{v}$, $k$
\STATE \textbf{output:} $\mathbf{V}$ \hfill\COMMENT{linearly independent basis for $\mathcal{K}_{k}(\mathbf{A},\mathbf{v})$}
\STATE$\triangleright$ Initialize matrix $\mathbf{V}$
\STATE$\triangleright$ Update $\mathbf{v}:= \mathbf{v}/\|\mathbf{v}\|_2$
\STATE$\triangleright$ Set $\mathbf{V}_{:,1} = \mathbf{v}$
\FOR {$j=1:k-1$}
    \STATE$\triangleright$ Compute $\mathbf{v} = \mathbf{A} \mathbf{V}_{:,j}$
    \FOR {$i=1:j$} \label{alg:Krylov arnoldi processs mgs start}
        \STATE$\triangleright$ Compute $\mathbf{v} := \mathbf{v} - \langle \mathbf{V}_{:,i}, \mathbf{v} \rangle \mathbf{V}_{:,i} $
    \ENDFOR \label{alg:Krylov arnoldi processs mgs end}
    \STATE$\triangleright$ Update $\mathbf{v}:= \mathbf{v}/\|\mathbf{v}\|_2$
    \STATE$\triangleright$ Set $\mathbf{V}_{:,j+1} = \mathbf{v}$
\ENDFOR
\end{algorithmic}
\end{algorithm}

Classical Gram-Schmidt (CGS) with re-orthogonalization is also commonly used in scientific computing because it can leverage \texttt{BLAS}  level-2 operations for computational performance and significantly reduces the number of parallel reduction operations (required in computing inner products) compared to MGS.
Note that in the context of OFRR, re-orthogonalization may  not be needed.

For $\mathbf{A}\in\mathbb{R}^{n\times n}$, computing the $j$-th column of $\mathbf{V}$ requires approximately $4nj$ FLOPs, leading to a total cost of roughly $2nk^2$ excluding the matrix-vector multiplication with $\mathbf{A}$.
While CGS has the same approximate FLOP count ($2nk^2$), it differs structurally from MGS by using \texttt{BLAS} level-2 operations. 
Specifically, it employs matrix-vector products to compute sets of inner products $\langle \mathbf{V}_{:,i}, \mathbf{v} \rangle$ and perform vector updates, contrasting with the \texttt{BLAS} level-1 operations used in MGS.
Re-orthogonalization would double the cost to $4nk^2$ for both methods. Alternatively, Householder reflectors can also be used to generate the orthonormal basis, offering superior numerical stability.
However, explicit formation of $\mathbf{V}$, necessary for certain applications, makes the total FLOP count approximately $4nk^2$.

Therefore, in reduced-precision environments with OFRR where strict orthonormality may not be required, CGS or MGS without re-orthogonalization offer improved FLOP efficiency and are often preferred to Householder reflectors. 
In the following sections, we will only discuss the use of CGS and MGS.

While the Arnoldi process using Gram-Schmidt orthogonalization can provide good numerical stability for OFRR, its reliance on full orthogonalization is often computationally expensive. A primary reason for this expense is the frequent requirement for inner product computations inherent in Gram-Schmidt methods.

Furthermore, performing these inner products in low-precision formats, such as half precision, presents significant challenges beyond just the computational cost. The limited dynamic range increases the risk of overflow or underflow during summation, and precision loss can severely compromise the numerical stability of the orthogonalization process. While strategies like accumulating inner products in higher precision or applying dynamic vector scaling can mitigate these issues, they introduce additional computational overhead.

To reduce computational demands, we will explore alternative methods or modifications that mitigate the cost and numerical issues arising from inner product computations in reduced precision in the next section.

\subsubsection{Krylov-Hessenberg Process}
In this section, we propose to adopt the Hessenberg process as an alternative to generate linearly independent bases.
This method is derived from the Generalized Hessenberg process, as detailed in Wilkinson’s classical book \cite[Chap. 6]{wilkinson1988algebraic}. Unlike traditional methods that depend on inner products to compute the projection coefficients, the Hessenberg process obtains projection coefficients by extracting entries directly from previously computed bases.
}
More specifically, the procedure generates the basis vectors  $\mathbf{v}_1, \mathbf{v}_2, \cdots, \mathbf{v}_k$ of the Krylov subspace in the usual Arnoldi-like fashion except that orthogonality is enforced against a preselected set of vectors $\mathbf{z}_1, \mathbf{z}_2, \cdots, \mathbf{z}_k$ instead of the $\mathbf{v}_i$'s themselves. 
Thus, at step $j$ of the procedure we compute the vector $\mathbf{A} \mathbf{v}_j$ and orthogonalize it against $\mathbf{z}_1, \mathbf{z}_2, \cdots, \mathbf{z}_j$, leading to a vector $\mathbf{v}_{j+1}$ that satisfies the usual relation among Krylov basis vectors:
\begin{equation}\label{eq:KryHess1}
\mathbf{h}_{j+1,j} \mathbf{v}_{j+1} = \mathbf{A} \mathbf{v}_j -\sum_{i=1}^j \mathbf{h}_{i,j} \mathbf{v}_i .
\end{equation}

{In this paper, we consider the simplest case where we choose $\mathbf{z}_i=\mathbf{e}_i$, and scale all $\mathbf{v}_i$'s so that $\|\mathbf{v}_i\|_\infty=1$.}
This procedure has been advocated in \cite{sadok1999cmrh} as an alternative to GMRES for solving linear systems of equations iteratively.
Later it was also exploited for solving \emph{dense} linear systems, see, for example, \cite{heyouni2008new}.  
The Hessenberg algorithm for generating a non-orthogonal basis is summarized in  Algorithm~\ref{alg:Krylov hessenberg processs}.

\begin{algorithm}[htbp]
\caption{\textit{Computing non-orthogonal bases from the Krylov-Hessenberg Process}}\label{alg:Krylov hessenberg processs}
\begin{algorithmic}[1]
\STATE \textbf{input:} $\mathbf{A}\in\mathbb{C}^{n\times n}$, $\mathbf{v}$, $k$
\STATE \textbf{output:} $\mathbf{V}$ \hfill\COMMENT{linearly independent basis for $\mathcal{K}_{k}(\mathbf{A},\mathbf{v})$}
\STATE$\triangleright$ Initialize matrix $\mathbf{V}$
\STATE$\triangleright$ Initialize permutation vector $\mathbf{\pi}$
\STATE$\triangleright$ Find $r$ the index of element in $\mathbf{v}$ with largest magnitude \label{alg:Krylov hessenberg processs max1}
\STATE$\triangleright$ Update $\mathbf{v}:= \mathbf{v}/\mathbf{v}_{r}$
\STATE$\triangleright$ Set $\mathbf{\pi}_1:= r$
\STATE$\triangleright$ Set $\mathbf{V}_{:,1} = \mathbf{v}$
\FOR {$j=1:k-1$}
    \STATE$\triangleright$ Compute $\mathbf{v} = \mathbf{A} \mathbf{V}_{:,j}$
    \FOR {$i=1:j$}
        \STATE$\triangleright$ Compute $\mathbf{v} := \mathbf{v} - \mathbf{v} (\mathbf{\pi}_i)  \mathbf{V}_{:,i} $
    \ENDFOR
    \STATE$\triangleright$ Find $r$ the index of element in $\mathbf{v}$ with largest magnitude. \label{alg:Krylov hessenberg processs max2}
    \STATE$\triangleright$ Update $\mathbf{v}:= \mathbf{v}/\mathbf{v}_{r}$
    \STATE$\triangleright$ Set $\mathbf{\pi}_{j+1}:= r$
    \STATE$\triangleright$ Set $\mathbf{V}_{:,j+1} = \mathbf{v}$
\ENDFOR
\end{algorithmic}
\end{algorithm}

{As can be seen the Hessenberg process is inner-product free. Only one reduction operator to obtain the index of the entry with the largest magnitude is needed during each outer step (Line~\ref{alg:Krylov hessenberg processs max2} of Algorithm \ref{alg:Krylov arnoldi processs}). This constitutes a significant advantage over the Arnoldi process. 
The arithmetic operations involved are also less prone to numerical stability issues as will be seen later.
Note also that he FLOP count for the Hessenberg process {when $n\gg k$ is roughly $nk^2$} excluding the matrix-vector multiplication with $\mathbf{A}$.}


\subsection{Linearly Independent Basis for Subspace Iteration}

{
The previous discussions have focused on strategies for constructing linearly independent bases for the Krylov subspace. We now shift focus to subspace iteration.

Subspace iteration, in contrast, adopts a block-oriented approach. It allows for the potential use of higher-level \texttt{BLAS} operations (e.g., \texttt{BLAS} level-3 for the matrix-block product if $\mathbf{A}$ is dense) and can lead to improved computational efficiency on modern architectures compared to the Krylov methods.
Similar to the challenges encountered in naive Krylov-based implementations, directly applying $\mathbf{X}_{\text{iter}} = \mathbf{A}^{\text{iter}} \mathbf{X}_0$ without modification may result in severe numerical difficulties.

\subsubsection{QR factorization}

A straightforward way to construct a linearly independent basis with an improved condition number is to run a QR factorization with column pivoting using either CGS or MGS. 
Although the arithmetic work is identical to that in Arnoldi—$2mn^{2}$ FLOPs for a single sweep and $4mn^{2}$ FLOPs with re‑orthogonalization—the practical speed can differ greatly. 

\begin{figure}[htbp]
    \centering
    \includegraphics[width=0.25\linewidth]{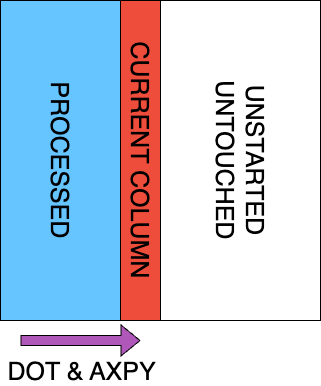}
    \hspace{50pt}
    \includegraphics[width=0.25\linewidth]{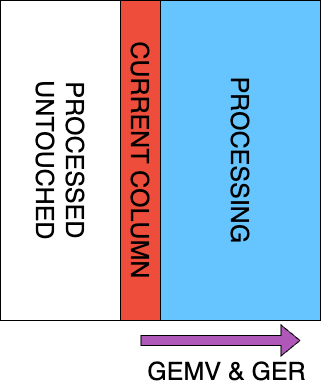}
    \caption{The MGS sweep used within Arnoldi (left) and a ``right‑looking'' variant of MGS for subspace iteration (right). In Arnoldi each processed basis vector updates the single current column via repeated \texttt{DOT} \& \texttt{AXPY} operations. In the `righ-looking' version, once the current column is orthogonal, a single \texttt{GEMV} \& \texttt{GER} applies its correction to the entire trailing block at once.}
    \label{fig:mgs versions}
\end{figure}

As shown in the left panel of Figure~\ref{fig:mgs versions}, the MGS sweep embedded in Arnoldi is ``left‑looking'': at step $j$ only the columns already computed are available, so the projection must be carried out through $j$ successive \texttt{DOT}–\texttt{AXPY} pairs (\texttt{BLAS} level-1), making the computation memory‑bound. 
In contrast, for subspace iteration the entire 
block is resident in memory; the sweep can therefore be organized in a ``right‑looking'' manner (right panel of Figure~\ref{fig:mgs versions}), where one \texttt{GEMV} followed by a rank‑1 \texttt{GER} updates all trailing columns at once. Packaging the same FLOPs into \texttt{BLAS} level-2 calls raises arithmetic intensity and yields markedly higher sustained performance on modern CPUs and GPUs. 

\subsubsection{The Hessenberg Process}
The Krylov-Hessenberg process introduced in the previous section can be readily modified for building a linearly independent basis for the subspace iteration.
Similar to MGS, the Hessenberg process for subspace iteration could also be implemented in a  `right-looking' way, as detailed in Lines 13-15 of Algorithm~\ref{alg:hessenberg process}. 
The output of this algorithm returns a linearly independent basis $\mathbf{Q}=[\mathbf{q}_1,\mathbf{q}_2,\ldots]$. Since the columns of $\mathbf{A}$ may be nearly linearly dependent, it is crucial to skip columns with negligible magnitude during factorization as implemented in Line 9 of Algorithm~\ref{alg:hessenberg process}. Specifically, when selecting row pivots, if the largest magnitude entry in a column is close to the working precision, this column should be skipped, and the factorization should continue with the next column. The objective is to ensure that $\spann(\mathbf{Q})$ closely approximates $\spann(\mathbf{A})$. Therefore, zero or near-zero columns should be omitted rather than padded with standard basis vectors to reflect 
the correct numerical rank.
 
\begin{algorithm}[htbp]
\caption{\textit{Computing non-orthogonal bases from the Hessenberg Process}}\label{alg:hessenberg process}
\begin{algorithmic}[1]
\STATE \textbf{input:} $\mathbf{A}\in\mathbb{C}^{n\times k}$, $tol$\hfill\COMMENT{$tol$ is to exclude zero columns}\\
\STATE \textbf{output:} $\mathbf{Q}$ \hfill\COMMENT{linearly independent basis}
\STATE$\triangleright$ Initialize matrix $\mathbf{Q}=\mathbf{A}$
\STATE$\triangleright$ Initialize nonzero column indicator vector $\mathbf{s}$ to \texttt{TRUE}
\STATE$\triangleright$ Initialize permutation vector $\mathbf{\pi}$
\FOR {$j=1:k$} 
\STATE$\triangleright$ Set $\mathbf{q} = \mathbf{Q}_{:.j}$
    \STATE$\triangleright$ Find $r$ the index of element in $\mathbf{q}$ with largest magnitude.
    \IF{$\vert\mathbf{q}_k\vert\geq tol$}
    \STATE$\triangleright$ Update $\mathbf{q} := \mathbf{q}/\mathbf{q}_r$ 
    \STATE$\triangleright$ Set $\mathbf{\pi}_j = r$
    \STATE$\triangleright$ Set $\mathbf{Q}_{:,j} = \mathbf{q}$
    \FOR {$i = j+1:k$}\label{alg:hessenberg process inner loop start}
        \STATE$\triangleright$ Update $\mathbf{Q}_{:,i}:= \mathbf{Q}_{:,i}-\mathbf{Q}_{\mathbf{\pi}_i,i}\mathbf{q}$
    \ENDFOR\label{alg:hessenberg process inner loop end}
    \ELSE
    \STATE$\triangleright$ Set $\mathbf{s}_k$ to \texttt{FALSE}
    \STATE$\triangleright$ Set $\mathbf{\pi}_r=1$ 
    \ENDIF
\ENDFOR
\STATE$\triangleright$ Set $\mathbf{Q}=\mathbf{Q}_{:,\mathbf{s}}$\label{alg:hessenberg process - line selection}
\end{algorithmic}
\end{algorithm}

Finally, we discuss the connection between the Hessenberg process and the LU factorization.
For a given $\mathbf{A}\in\mathbb{C}^{n\times k}$ with full column rank, LU factorization with row pivoting computes
\begin{equation}
    \mathbf{P}\mathbf{A} = \mathbf{L}\mathbf{U}\Rightarrow \mathbf{A} = (\mathbf{P}^{\top}\mathbf{L})\mathbf{U},
\end{equation}
where $\mathbf{L}\in\mathbb{C}^{n\times k}$, $\mathbf{U}\in\mathbb{C}^{k\times k}$, and $\mathbf{P}\in\mathbb{R}^{n\times n}$ is a permutation matrix, i.e., a matrix obtained by reordering the rows of an identity matrix. 
The matrix $\mathbf{P}^{\top}\mathbf{L}$ now has the same range as $\mathbf{A}$, and its columns could be used as a linearly independent basis for the column space of $\mathbf{A}$. 

Although both the Hessenberg process and the LU factorization have been widely used, their direct algorithmic relationship is worth highlighting.
An interesting observation is that the output matrix produced by the Hessenberg process in Algorithm~\ref{alg:hessenberg process} is identical to the output matrix obtained from the row-pivoted LU factorization.
Thus, the numerical stability analysis of the Hessenberg process is supported by existing results on the LU factorization. 

Motivated by this connection, we next examine advances in mixed-precision LU factorization, which has become a topic of significant interest due to its potential for accelerating the solution of large-scale linear systems of the form $\mathbf{A}\mathbf{x} = \mathbf{b}$. One development was made by Haidar et al.~\cite{haidar2018harnessing}, who proposed a low-precision LU factorization strategy 
 within an iterative refinement framework. Their work relied on a partitioned, `right-looking' LU algorithm designed to maximize data locality and arithmetic intensity. 
More recently, Lopez et al. \cite{lopez2023mixed} proposed a highly efficient mixed-precision approach based on a partitioned `left-looking' LU factorization.
The core idea behind such partitioned approaches is to perform the factorization on $r\times r$ blocks, allowing the algorithms to leverage high-performance \texttt{BLAS} level-3 operations.
Another contribution  \cite{sahraneshinsamani2025mixed}, explores pre-pivoting strategies within mixed-precision frameworks, where the ultimate goal is to achieve effective \texttt{FP64} accuracy.

Despite the focus of these efforts on solving linear systems, rather than subspace generation, the methodological innovations developed therein are highly relevant to our proposed work. Specifically, the structured block-wise execution and memory-aware optimizations introduced in partitioned LU schemes may be adapted to enhance the efficiency of the Hessenberg process when used for constructing basis matrices in Krylov subspace and subspace iteration methods. Such a generalization would not only expand the utility of the Hessenberg process but could also yield significant performance benefits on modern computing architectures. A rigorous exploration of these extensions—especially in the context of block Krylov methods—presents a promising direction for future research.

}


\subsubsection{Condition Number Comparison of Computed Bases}
In this subsection, we compare the condition numbers of the bases generated by various methods discussed in the previous subsections. We choose Gaussian kernel matrices with data uniformly sampled within a square of edge length $\sqrt{1000}$ in the experiment. This time, we fix $s = 0.01$ and vary $l$ from $1$ to $100$ in \eqref{eq:guassiankernel} to test matrices with different spectral properties. When $l$ is close to 1, the eigenvalues of the matrix decay slowly, and as $l$ approaches 100, most of its eigenvalues would be close to $s$.
To be specific, when $l=1$ the $20$-th largest eigenvalue is larger than $6$, while when $l=100$ the $7$-th largest eigenvalue is already close to $0.01$. We perform subspace iteration with $m=1$, $\text{iter}=3$, and $k=20$ to compute $\mathbf{X}_{iter}$, and then compute the condition number of the postprocessed basis matrix \(\mathbf{Q}\). Specifically, we calculate the condition numbers of \(\mathbf{X}_{iter}\), \(\mathbf{Q}\) obtained through Modified Gram-Schmidt (MGS) with re-orthogonalization, Classical Gram-Schmidt (CGS) with re-orthogonalization, and the one returned from the Hessenberg process. 
The subspace iteration and basis computations are performed in double, single, and half precision. {As the results in single precision are consistently close to those in double precision, we exclude them from the figure to improve readability.} {Notably, for the half-precision configuration in this experiment, the computations are carried out using native  \texttt{FP16} arithmetic without intermediate calculations in \texttt{FP32}, presenting a more challenging scenario for maintaining numerical stability.} The final condition numbers are calculated in double precision for accuracy.

\begin{figure}[htbp]
    \centering
    \includegraphics[width=0.95\textwidth]{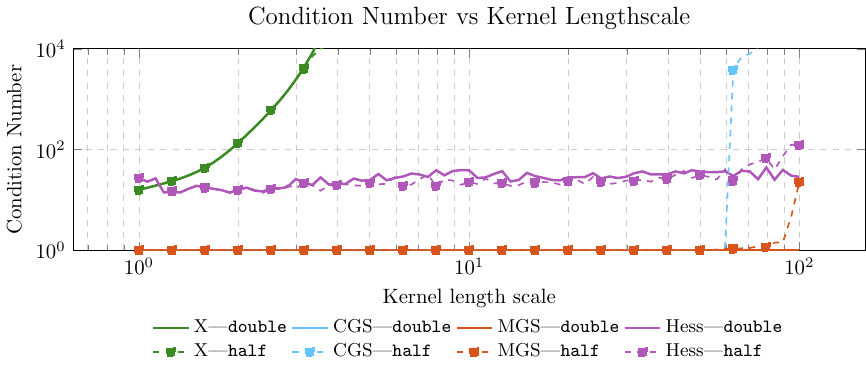}
    \caption{Condition number for bases computed by four different methods: no stabilization (-X), MGS with re-orthogonalization, CGS with re-orthogonalization, and Hessenberg. Tests are performed on multiple kernel matrices, each sized $1000\times 1000$, with length scales varying from $1$ to $100$.}
    \label{fig:condition number of basis}
\end{figure}

As we can see from Figure~\ref{fig:condition number of basis}, the condition number of the original $\mathbf{X}_{iter}$ increases significantly as the problem becomes numerically low-rank, necessitating the use of QR-like strategies for building a well-conditioned basis. On the other hand, double-precision MGS and CGS (with re-orthogonalization) consistently produce a basis with a condition number close to 1.
However, as the matrices become numerically low-rank, MGS and CGS begins to lose orthogonality under half-precision, leading to higher condition numbers. 
Although the condition numbers for the Hessenberg process are slightly larger than those for MGS and CGS in most cases, they exhibit consistently low variance across different length scales and precisions. This stability, combined with its efficiency, makes it a robust and efficient option for building the bases with reduced precision arithmetic.

Next, Table~\ref{tab:method_comparison} provides a comparative summary of the dominant \texttt{BLAS} operations, FLOPs, parallelism, and numerical stability across different variants of Modified Gram-Schmidt (MGS), Classical Gram-Schmidt (CGS), and the Hessenberg process.

\begin{table}[htbp]
\centering
\caption{Comparison of MSG, CGS and Hessenberg Process when applied to an $n\times k$ matrix $\mathbf{A}$ {when $n\gg k$}.}
\label{tab:method_comparison}
\resizebox{\textwidth}{!}{
\begin{tabular}{@{}lllll@{}}
\toprule
Method Variant & Dominant \texttt{BLAS} & FLOPs (Order) & Parallelism & Stability  \\
\midrule

\multicolumn{5}{c}{\textbf{Modified Gram-Schmidt (MGS)}} \\
\midrule
Left-Looking MGS & Level 1 & $2nk^2$ & Low & Moderate  \\
 \addlinespace
Left-Looking MGS re-orth & Level 1 & $4nk^2$ & Low & Good  \\
 \addlinespace
Right-Looking MGS & Level 2 & $2nk^2$ & Moderate & Moderate \\
\midrule

\multicolumn{5}{c}{\textbf{Classical Gram-Schmidt (CGS)}} \\
\midrule
CGS & Level 2 & $2nk^2$ & Moderate & Low \\
 \addlinespace
CGS2 & Level 2 & $4nk^2$ & Moderate & Moderate \\
\midrule

\multicolumn{5}{c}{\textbf{Hessenberg Process}} \\
\midrule
Left-Looking & Level 1 & $nk^2$ & Moderate & Good \\
 \addlinespace
Right-Looking & Level 2 & $nk^2$ & Moderate & Good \\
 \addlinespace
Block & Level 3 & $nk^2$ & Good & Good \\
\bottomrule
\end{tabular}%
}
\end{table}

{
To conclude this section, we outline the mixed-precision strategies employed within the OFRR framework. These strategies are adapted  based on the precision in which the input matrix $\mathbf{A}$ is available. Figure~\ref{fig:diagram} serves to illustrate the data flow and suggested precision choices for one important scenario: applying subspace iteration to a matrix $\mathbf{A}$ provided only in half precision (\texttt{FP16}). When employing Krylov subspace methods within the OFRR framework, the core principle of major steps remains similar.

As illustrated in Figure~\ref{fig:diagram}, memory conservation is prioritized by storing the basis vectors $\mathbf{V}$ and intermediate results like $\mathbf{W} :=\mathbf{A}\mathbf{V}$ primarily in \texttt{FP16} format. Computations such as applying matrix-matrix multiplication with $\mathbf{A}$ and performing the basis construction (OFRR Hessenberg/QR) can often use \texttt{FP16} compute precision, potentially enhanced with \texttt{FP32} accumulation. A critical step is the projection step to form the matrices $\mathbf{B}:=\mathbf{V}^\top \mathbf{W}$ and $\mathbf{M}:=\mathbf{V}^\top \mathbf{V}$. This step takes input matrices (like $\mathbf{V}$ and $\mathbf{W}$) in \texttt{FP16}, performs the matrix multiplications and accumulations using \texttt{FP32} compute precision, and stores the resulting small matrices $\mathbf{B}$ and $\mathbf{M}$ in \texttt{FP32}. Finally, these \texttt{FP32} matrices are promoted to \texttt{FP64} to solve the generalized eigenvalue problem defined by matrix pencil $(\mathbf{B}, \mathbf{M})$ using standard double-precision solvers.

The strategy is much simpler for subspace iteration with higher-precision inputs. If $\mathbf{A}$ is \texttt{FP64}, all storage and computations throughout the process typically remain in \texttt{FP64}. If $\mathbf{A}$ is \texttt{FP32}, the framework generally operates with \texttt{FP32} as the working precision for both storage and computation, with the final projected generalized eigenvalue problem solved in \texttt{FP64} to maximize the accuracy of the resulting eigenpairs.

In summary, the OFRR framework flexibly integrates mixed-precision strategies, leveraging low-precision storage and computation where feasible, while strategically increasing precision for numerically sensitive stages like projection and the final solve.

}

\begin{figure}[htbp]
    \centering
    \includegraphics[width=0.995\linewidth]{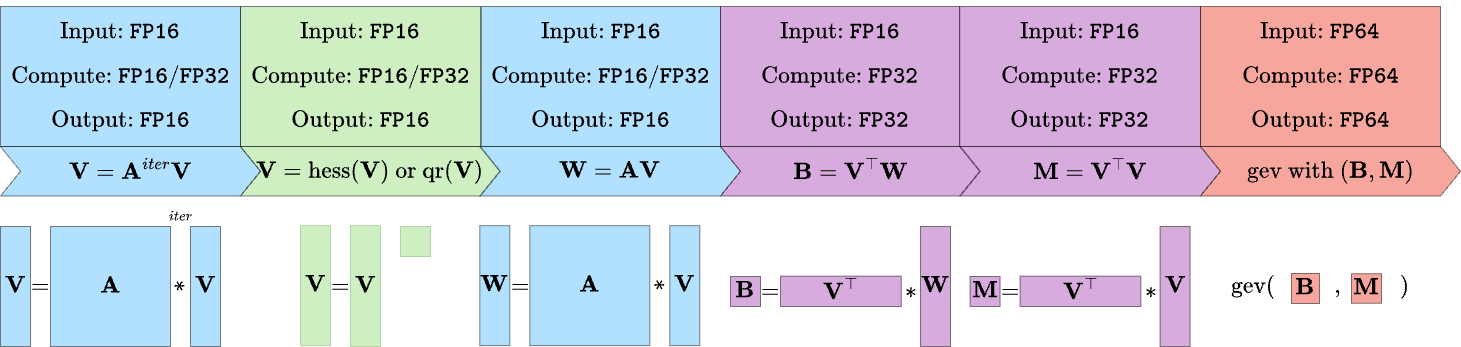}
    \caption{Mixed-precision strategy within the OFRR framework using subspace iteration for an \texttt{FP16} input matrix $\mathbf{A}$. Each stage indicates typical precision choices for data storage (Input/Output) and computation (Compute), illustrating the progression from \texttt{FP16} working precision to \texttt{FP32} projection and an \texttt{FP64} final solve.}
    \label{fig:diagram}
\end{figure}

\if 0

To conclude this section, we summarize the storage precision of each step of subspace iteration with OFRR in Table~\ref{tab:cost}.

\begin{table}[htbp]
\centering
\caption{{\color{red}Storage precision of matrices given the precision of $\mathbf{A}$. TODO: need suggestions on better way to present this.}}
\label{tab:cost}
\begin{tabular}{@{}c|ccccc@{}}
\toprule
 $\mathbf{A}$ & $\mathbf{X}$ & $\mathbf{Q}$ & $\mathbf{A}\mathbf{Q}$ & $\mathbf{Q}^\top\mathbf{A}\mathbf{Q}$ & $\mathbf{Q}^\top\mathbf{Q}$ \\
\midrule
 \texttt{FP16} & \texttt{FP16} & \texttt{FP16} & \texttt{FP16} & \texttt{FP64} & \texttt{FP64}\\
\midrule
 \texttt{FP32} & \texttt{FP32} & \texttt{FP32} & \texttt{FP32} & \texttt{FP64}  & \texttt{FP64} \\
\midrule
 \texttt{FP64} & \texttt{FP64} & \texttt{FP64} & \texttt{FP64} & \texttt{FP64} & \texttt{FP64} \\
\midrule
\bottomrule
\end{tabular}%
\end{table}
}

\fi

%
%



%
%

\section{Numerical Experiments}\label{sec5}

In this section, we test our OFRR framework on different problems that require (partial) eigenvalue  or singular value decompositions under various numerical precision settings.
Our evaluation consists of two parts: (i) numerical accuracy is assessed using simulations implemented in \texttt{MATLAB} (version 2024b); (ii) computational efficiency is evaluated using an optimized \texttt{C++} implementation accelerated by CUDA, compiled with \texttt{nvcc} (version 12.8).
All experiments were conducted on a hardware platform running Ubuntu 24.04.2 LTS, equipped with an Intel(R) Core(TM) i7-12700K CPU (8 Performance-cores @ 3.60 GHz and 4 Efficient-cores @ 2.70 GHz), 64 GB of system memory, and a \texttt{NVIDIA} GeForce RTX 3070 Ti GPU (8 GB GDDR6X VRAM, compute capability 8.6, and 6144 \texttt{CUDA} cores).
For reproducibility, our research code is publicly available on \texttt{GitHub}\footnote{\href{https://github.com/Hitenze/MixedPrecisionOFRR}https://github.com/Hitenze/MixedPrecisionOFRR}.

\subsection{Implementation Details}

Our \texttt{MATLAB} implementation was designed to rigorously evaluate the numerical accuracy of the proposed OFRR framework across different precision settings.

We simulated half-precision arithmetic using \texttt{MATLAB}'s built-in \texttt{half} datatype and added custom functions to precisely control numerical precision. A key detail is that \texttt{MATLAB}'s standard operations on \texttt{half} arrays often perform intermediate computations in higher precision, and only convert the final result back to \texttt{FP16}. This type of mixed-precision behavior is common in libraries like \texttt{cuBLAS}, although some GPU routines support full \texttt{FP16} execution.

To ensure that all computations adhered strictly to our intended precision model, we required full control over every arithmetic step. For this reason, we avoided using \texttt{MATLAB}'s built-in \texttt{qr} function and instead implemented our own versions of the MGS algorithm with re-orthogonalization to serve as the QR-based baseline. This approach allowed us to guarantee that every operation followed the specified precision path, with no hidden accuracy promotions or conversions. For MGS, we used the standard threshold $\sqrt{2}/2$ to detect loss of orthogonality and trigger re-orthogonalization.

Furthermore, several other custom functions were necessary because \texttt{MATLAB} lacks native half-precision support for certain operations.
We implemented a custom $2$-norm function specifically for low precision, using the standard technique of scaling the vector by its infinity norm before computing the $2$-norm, i.e., $\|\mathbf{x}\|_2=\|\mathbf{x}\|_{\infty}\|\mathbf{x}/\|\mathbf{x}\|_{\infty}\|_2$.
This technique mitigates overflow and underflow issues that are common in low-precision arithmetic.
For sparse matrix operations, we implemented custom routines based on the Compressed Sparse Row (CSR) format, chosen for its implementation simplicity.
For experiments conducted in single precision and double precision (FP64), we used standard \texttt{MATLAB} data types and built-in functions.

In addition to the \texttt{MATLAB} implementation for accuracy studies, we developed \texttt{C++}/\texttt{CUDA} implementations of key subroutines to evaluate runtime performance on GPUs. 
The \texttt{C++} implementation employs the standard \texttt{FP16} data type defined in \texttt{cuda\_fp16.h}, and integrates functions from \texttt{cuBLAS}, standard \texttt{BLAS}, and \texttt{LAPACK}.
All dense matrices are stored in column-major order (\texttt{Fortran}-style), and all \texttt{cuBLAS} calls and custom kernels are executed on the default \texttt{CUDA} stream.
Unless otherwise specified, the primary matrix data resides in device memory during computation.
Scalar parameters (e.g., weights for linear combinations or column scaling factors) used in \texttt{cuBLAS} routines are passed from host memory by configuring the \texttt{cuBLAS} pointer mode \texttt{CUBLAS\_POINTER\_MODE\_HOST}.

For `left-looking' MGS implementation, we utilized \texttt{cuBLAS} routines \texttt{cublasDotEx}, \texttt{cublasAxpyEx}, and \texttt{cublasScaleEx}.
For `right-looking' MGS and CGS, we employed \texttt{cublasGemmEx} for efficient column updates. Note that we use \texttt{BLAS} level-3 routine rather than a Level-2 \texttt{GEMV}-based approach, primarily because \texttt{cuBLAS} does not provide a \texttt{GEMV} routine with the same flexibility in mixed-precision configurations as \texttt{cublasGemmEx}.
{We implemented custom \texttt{CUDA} kernels for most operations in the `right-looking' version of Hessenberg process.}
The first kernel is used to identify the index $\mathbf{\pi}_j$ of the element possessing the largest magnitude within the relevant sub-vector of a given column $j$ since the standard \texttt{cuBLAS} \texttt{cublasI<t>amax} routines lack \texttt{FP16} support. 
The resulting index $\mathbf{\pi}_j$ is stored directly in device memory.
Following the identification of $\mathbf{\pi}_j$, the $j$-th column is scaled based on the value of the element at this index. Subsequently in the `right-looking' version, using this scaled vector, we compute the necessary scaling weights required for updating subsequent columns. 
Finally, these computed weights are used to apply the transformation to all subsequent columns ($j+1$ to $n$) through a linear combination, which is executed by another custom \texttt{CUDA} update kernel.

It is important to note that while our custom kernels correctly implement the required functionality and demonstrate effective performance in our experiments, they were developed as prototypes. 
Unlike the highly tuned routines available in libraries such as \texttt{cuBLAS}, our kernels have not undergone extensive performance optimization. 
For instance, we employed fixed kernel block sizes across all tests and did not undertake architecture-specific tuning to identify the most efficient configurations for different GPUs. 
Also, we did not use block update as in the LU factorization routines discussed earlier based on \texttt{MAGMA} \cite{abdelfattah2024magma}.
Consequently, although the current implementation already achieves a notable level of performance, we anticipate that substantial further speedups could be realized through dedicated optimization efforts targeting these custom kernels.


\subsection{Approximation Accuracy}
{We first evaluated the accuracy of OFRR across three problem classes using \texttt{MATLAB}. For all experiments, we followed a consistent methodology. Each experiment was repeated with  three precisions: \texttt{FP64}, \texttt{FP32} and \texttt{FP16} for MatVecs and building bases.  Note that we always used double precision \texttt{FP64} when solving the final small (generalized) eigenvalue problems in the projected space. We used single precision to form this problem for the half-precision cases, as described in Figure~\ref{fig:diagram}. The tolerances to exclude columns were set dynamically relative to the machine epsilon of the respective precision, defined as $\varepsilon_{\texttt{FP64}}\approx2.22\times10^{-16}$ for \texttt{FP64}, $\varepsilon_{\texttt{FP32}}\approx1.19\times10^{-7}$ for \texttt{FP32}, and $\varepsilon_{\texttt{FP16}}\approx9.77\times10^{-4}$ for \texttt{FP16}. Random initial matrices/vectors with entries drawn from the uniform distribution $\mathcal{U}(0,1)$ were generated in FP64 and cast to the target precision; same sample was reused within each group of tests.
}

\subsubsection{Eigenvalue problems with kernel matrices}\label{sec:eig_kernel}

In our first set of experiments, we tested the performance of OFRR on eigenvalue problems with the Gaussian kernel matrices defined in Section~\ref{sec2} using a large length scale parameter $l$ and a small variance parameter $s$. 
This setup ensures that all the problems we test have only a few eigenvalues with large magnitudes.
Recall that for a dataset $\mathbf{D} \in \mathbb{R}^{n \times d}$, if we denote by $\mathbf{x}_i$ the $i$-th data point, the Gaussian kernel matrix is defined as $\mathbf{A}_{ij}= f(\exp(-\Vert\mathbf{x}_i-\mathbf{x}_j\Vert_2^2 / (2l^2))+s\delta_{ij})$, where $\delta_{ij}$ is a Kronecker delta function. 

In the first test, we uniformly sampled $1000$ data points from a square area with side length $\sqrt{1000}$, setting $f=0.2$, $l=10$, $s=0.01$, and $f=0.2$, $l=100$, $s=0$ to generate two test matrices.
The first test problem is not strictly numerically low-rank, as the eigenvalues only decay to $0.01$, as illustrated in {subplot (2,2)} of Figure~\ref{fig:test_20}. The second problem has a faster decay to $0.0$, as illustrated in {subplot (4,2)} of Figure~\ref{fig:test_20}.
We compared three different combinations of algorithms: the classical Rayleigh-Ritz projection with QR, OFRR with QR, and OFRR with the Hessenberg process.
We used MGS with re-orthogonalization to perform the QR factorization since it is the most accurate option.
For all the tests on the first matrix, we set the subspace dimension $k=50$, ran $m=10$ iterations with a step size $iter=3$, and reported the accuracy of the $20$ largest eigenvalues.
For the tests on the second matrix, since the eigenvalue decays faster, we set the subspace dimension $k=20$, ran $m=5$ iterations with a step size $iter=2$, and reported the accuracy of the $6$ largest eigenvalues. 

\begin{figure}[htbp]
    \centering
    \includegraphics[width=0.95\linewidth]{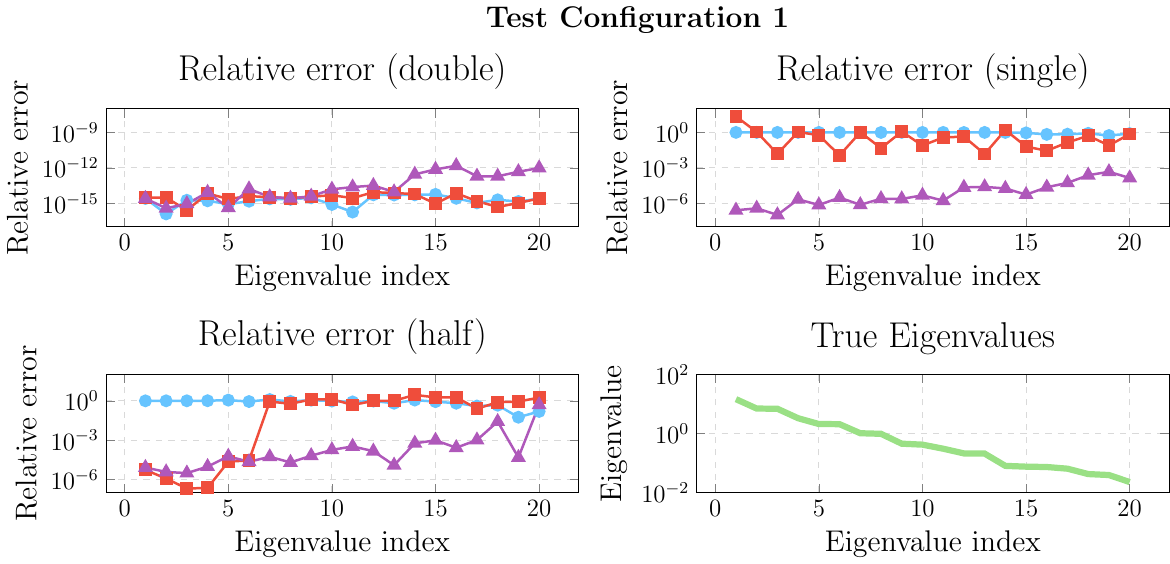}
    \includegraphics[width=0.95\linewidth]{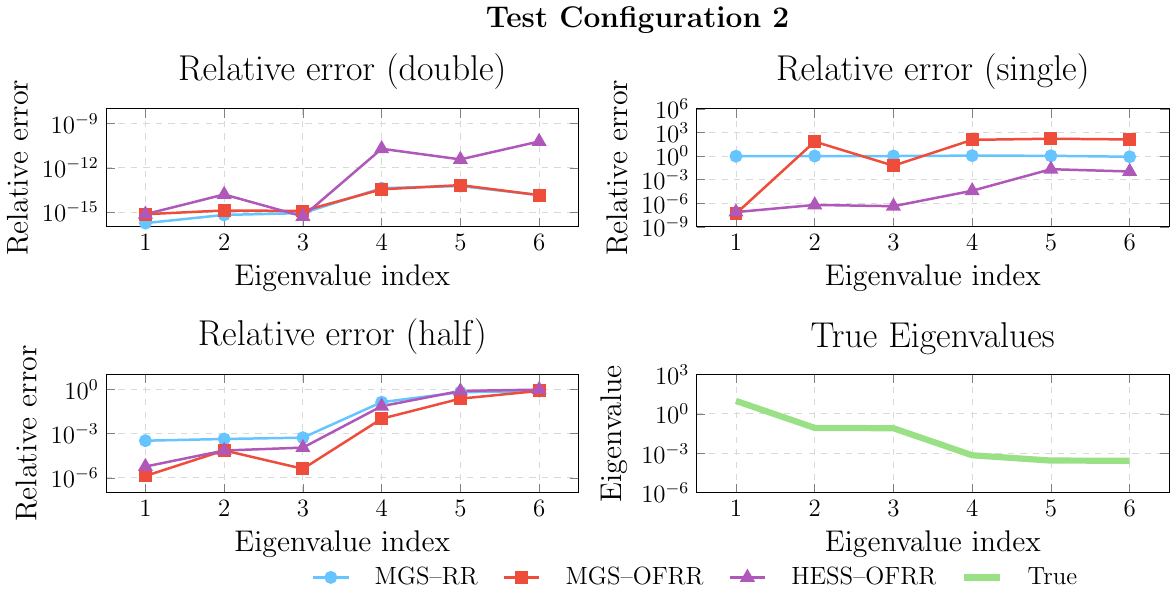}
    \caption{Relative approximation accuracy using different algorithms with different precisions and true leading eigenvalues. The test matrices are Gaussian kernel matrices of size $1000\times1000$ with $f=0.2$, $l=10$, $s=0.01$ ({test configuration 1}) and $f=0.2$, $l = 100$, $s = 0$ ({test configuration 2}). 
    }
    \label{fig:test_20}
\end{figure}

As we can see from Figure~\ref{fig:test_20}, under double precision, all three methods achieve high accuracy; the two QR‑based schemes are marginally superior because they employ orthonormal bases.
{Under single precision, both QR-based options exhibited relative errors larger than one. 
This deterioration arises from the loss of orthogonality in the single‑precision QR basis, which introduces several spurious large Ritz values and shifts the remaining eigenvalue approximations by at least one index.
{Under half precision, the use of a large tolerance $\varepsilon_{\texttt{FP16}}\approx 9.77\times10^{-4}$ eliminates more columns in the basis construction process, so the accuracy of the two QR-based algorithms both improved in the second test configuration.}
Even so, the orthogonal Rayleigh-Ritz-based method is still less accurate than OFRR with the Hessenberg process.}
On the other hand, even OFRR with QR produces results comparable to those of OFRR with the Hessenberg process in the second test configuration, the Hessenberg variant is significantly more efficient.

\subsubsection{Eigenvalue problems with sparse matrices}\label{sec:eig_sparse}

{In the next set of experiments, we evaluated the performance of several Krylov subspace methods within the OFRR framework, using sparse matrices from the Suite-Sparse matrix collection~\cite{davis2011university}. 
Specifically, we compared three algorithmic combinations: the classical Rayleigh-Ritz projection with the Lanczos method, the OFRR with the Lanczos method, and the OFRR with the Krylov-Hessenberg process.
Here, we use Lanczos with full orthogonalization, which is equivalent to the Arnoldi method for symmetric matrices.
For tests with restart turned on, we always restarted with the single Ritz vector corresponding to the largest Ritz value.
For these tests, we selected three matrices: \texttt{BCSSTK01}, \texttt{BCSSTK03}, and \texttt{1138\_BUS}.
Each matrix was scaled so that the largest eigenvalue is below $100$ to avoid overflow in half precision.
Key properties of these matrices are summarized in Table~\ref{tab:test_40_data}.
Because the Krylov subspace might not contain all leading eigenvectors, direct comparison of the computed Ritz values against the exact leading eigenvalues is not meaningful.
Instead, we assess the accuracy of computed approximate eigenpairs $(\lambda,\mathbf{v})$ by reporting the relative residual norm $\|\mathbf{A}\mathbf{v}-\lambda\mathbf{v}\|_2/|\lambda|$.
For \texttt{BCSSTK01} we set the Krylov subspace dimension to $20$ and report the $5$ largest Ritz pairs.  
For \texttt{BCSSTK03} the dimension is $50$ with $4$ restarts, and we report the $10$ largest Ritz pairs.  
For \texttt{1138\_BUS} the dimension is $100$ with $4$ restarts, and we report the $20$ largest Ritz pairs.}

\begin{table}[htbp]
\centering
\caption{Matrices from the SuiteSparse matrix collection. Here, $n$ is the matrix size, and $\mathrm{nnz}$ denotes the number of nonzeros. \label{tab:test_40_data}}
\begin{tabular}{c|ccc}
\toprule
 & \texttt{BCSSTK01} & \texttt{BCSSTK03} & \texttt{1138\_BUS} \\
\midrule
$n$    & $48$ & $112$ & $1128$ \\
$\mathrm{nnz}$  & $400$ & $640$ & $4054$ \\
Application  & Structural & Structural & Power Network \\
\bottomrule
\end{tabular}
\end{table}

\begin{figure}[htbp]
    \centering
    \includegraphics[width=0.95\linewidth]{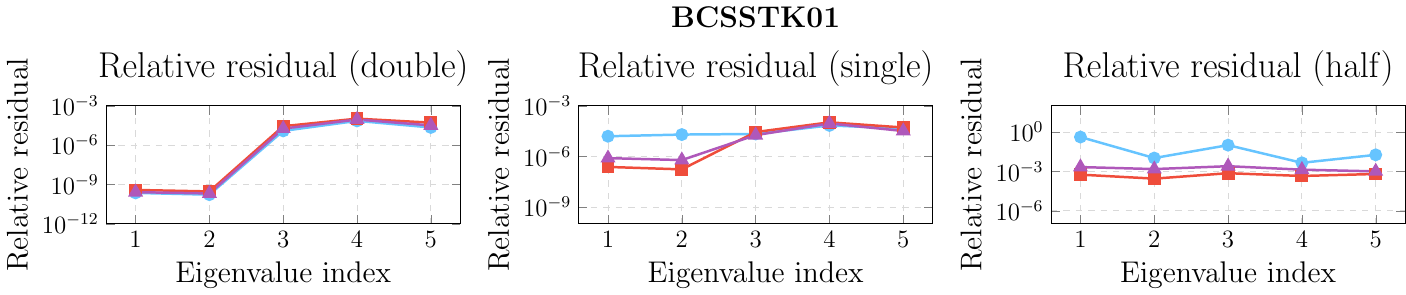}
    \includegraphics[width=0.95\linewidth]{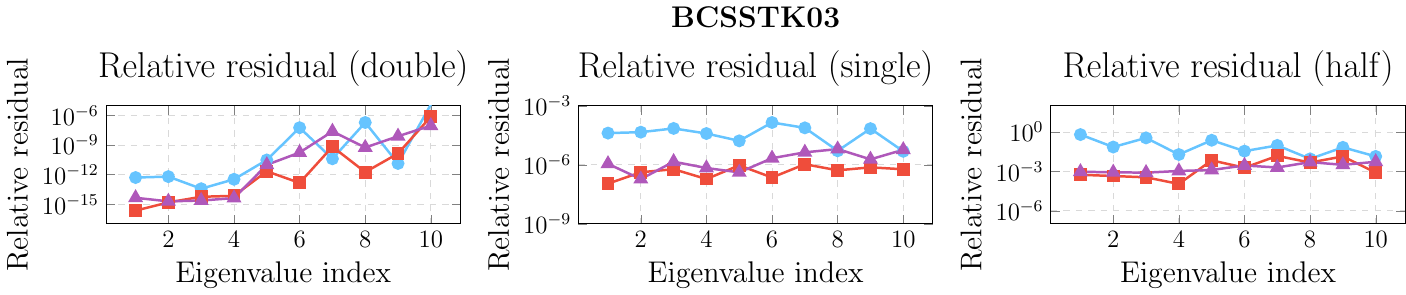}
    \includegraphics[width=0.95\linewidth]{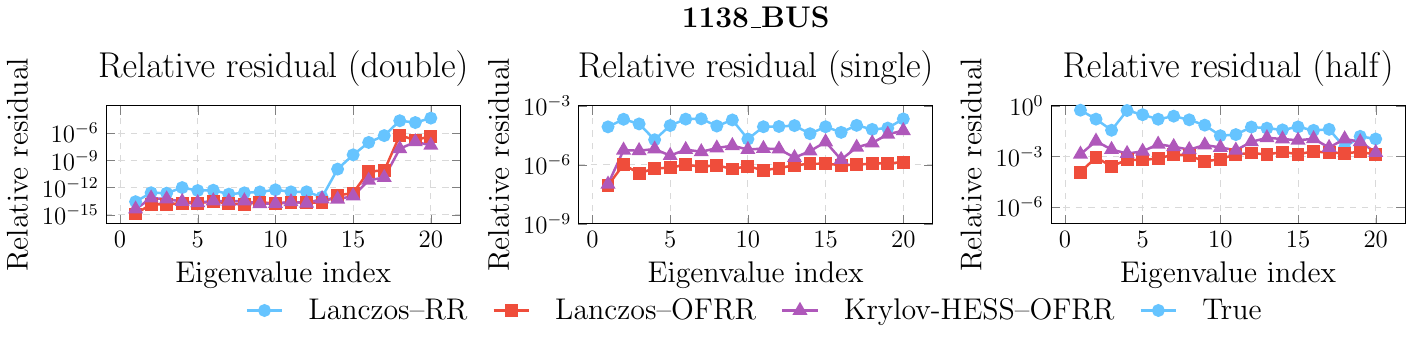}
    \caption{Relative residual norm using different algorithms with different precisions. The test matrices are from SuiteSparse matrix collection.
    }
    \label{fig:test_40}
\end{figure}

Figure~\ref{fig:test_40} confirms the trend established in Section~\ref{sec:eig_kernel}. All solvers achieve high accuracy in double precision. Consistent with results from previous half-precision tests, under single and half precision, the standard Rayleigh-Ritz method combined with Lanczos proved less accurate than the two methods utilizing the OFRR framework across all three test problems. Furthermore, comparing the two OFRR variants, the Hessenberg-based approach yielded an accuracy comparable to that of the Lanczos-based approach, reaffirming the benefits of  using the Hessenberg process with OFRR in reduced precision.

\subsubsection{Singular value decomposition with kernel matrices}\label{sec:svd_kernel}

{
In our next set of experiments, we tested the performance of OFRR on SVD with Gaussian kernel matrices.
{For the SVD experiments, we utilized the dataset comprising $n=1000$ data points previously generated for the eigenvalue tests presented in Section~\ref{sec:eig_kernel}. 
We denote this dataset as $\mathbf{D}_x \in \mathbb{R}^{n \times d}$. 
Subsequently, a second dataset, $\mathbf{D}_y \in \mathbb{R}^{m \times d}$ where $m=200$, was created by randomly selecting $m$ points from $\mathbf{D}_x$ without replacement.
We then constructed two $1000\times200$ kernel matrices $\mathbf{A}$ defined by $\mathbf{A}_{ij}= f(\exp(-\Vert\mathbf{x}_i-\mathbf{y}_j\Vert_2^2 / (2l^2)))$ for our test with $f=0.2$, $l=10$, and $f=0.2$, $l=100$.}
For both SVD test matrices, we performed $m=10$ iterations with a step size $iter=1$. For the first matrix, we used a subspace dimension of $k=20$ and reported the accuracy of the $10$ largest approximate singular values. For the second matrix, the subspace dimension was set to $k=10$, and we reported the accuracy of the $5$ largest approximate singular values.

\begin{figure}[htbp]
    \centering
    \includegraphics[width=0.95\linewidth]{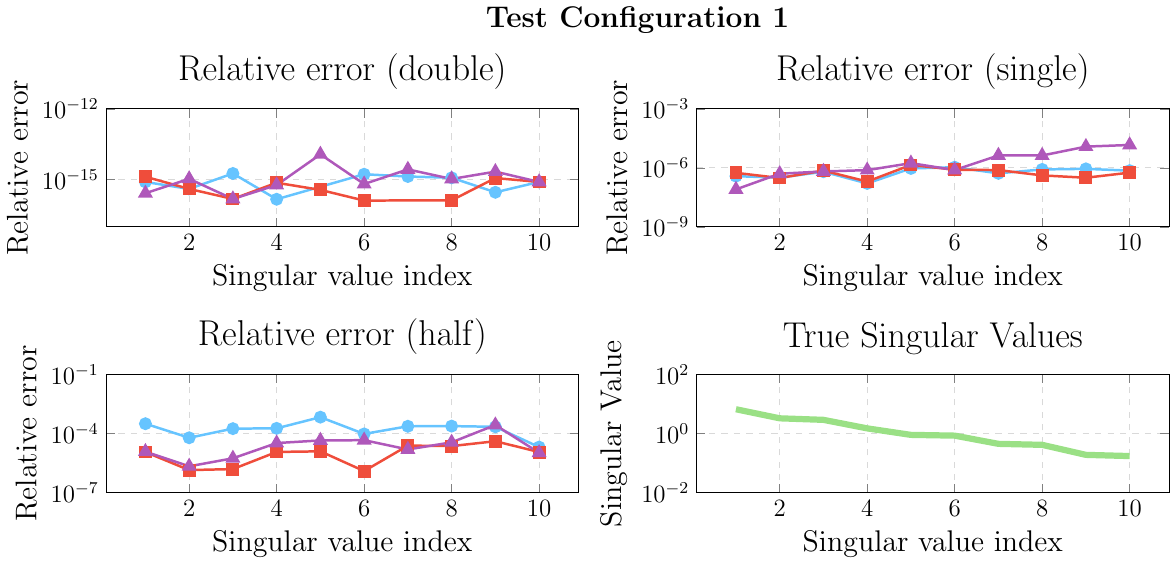}
    \includegraphics[width=0.95\linewidth]{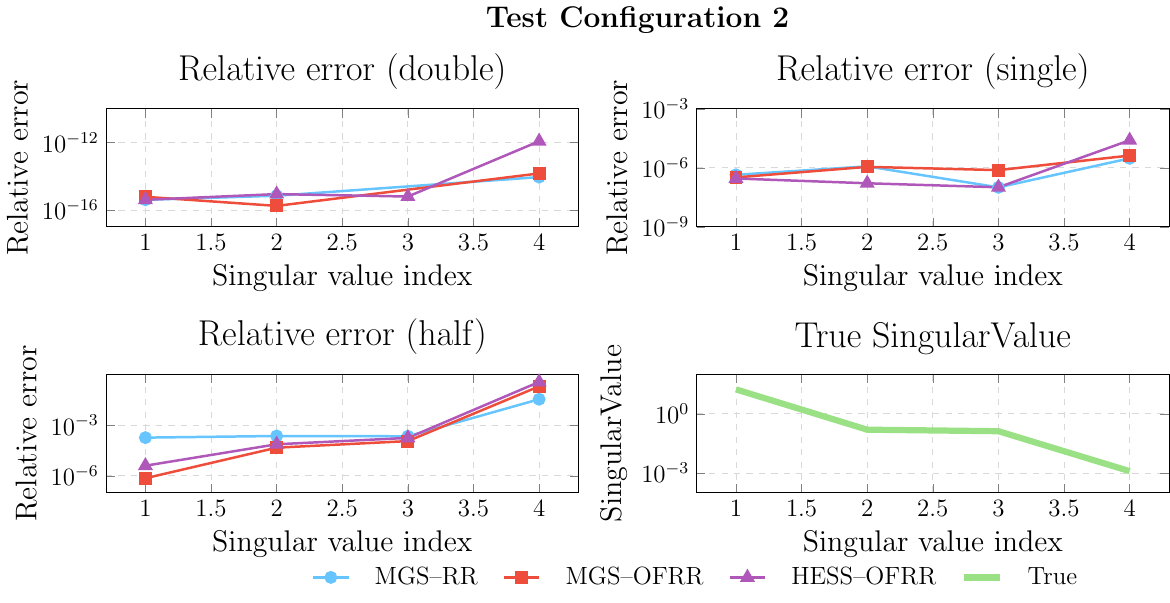}
    \caption{Relative approximation accuracy using different algorithms with different precisions (left) and true leading singular values (right). The test matrices are Gaussian kernel matrices of size $1000\times200$ with $f=0.2$, $l=10$ ({test configuration 1}) and $f=0.2$, $l = 100$ ({test configuration 2}). 
    }
    \label{fig:test_30}
\end{figure}

The results for the SVD approximation tests are presented in Figure~\ref{fig:test_30}. 
In both double precision and single precision, all algorithmic approaches provided accurate approximations of the singular values, with the two QR-based methods exhibiting slightly higher accuracy due to the use of orthogonal bases.
In half precision, the results continued to align with the primary trend observed in earlier experiments. The two methods based on the OFRR framework achieved better accuracy than the Rayleigh-Ritz-based method similar to the eigenvalue tests. Furthermore, within the OFRR framework, the Hessenberg variant yielded accuracy comparable to the QR variant in half precision.
This similarity in  the achieved accuracies, combined with the known computational advantages of the Hessenberg process, 
bolsters  its appeal as a tool for OFRR in low-precision computations.
}

\subsection{Speedup}

Having validated the numerical accuracy using the \texttt{MATLAB} implementation, we now shift our focus to quantifying the computational performance of key algorithmic components. 
To this end, we leverage the C++/\texttt{CUDA} implementations developed specifically for execution on GPU architectures.

This section focuses on analyzing the performance of routines responsible for generating linearly independent bases, as this step is a core computational kernel in the OFRR framework. Whether OFRR is integrated with Arnoldi-type iterations or embedded in subspace iteration schemes for solving eigenvalue or SVD problems, the basis generation step remains the dominant performance-critical component.
Therefore, we specifically compare the runtime performance of the following algorithms designed for this task: (i) Left-looking Modified Gram-Schmidt without re-orthogonalization (\texttt{MGS-L}); (ii) Right-looking Modified Gram-Schmidt (\texttt{MGS-R}); (iii) Left-looking Classical Gram-Schmidt without re-orthogonalization (\texttt{CGS}); (iv) Left-looking Classical Gram-Schmidt with re-orthogonalization (\texttt{CGS-2}); (v) Left-looking Hessenberg Process (\texttt{Hess-L}); (vi) Right-looking Hessenberg Process (\texttt{Hess-R}).
Benchmarking the efficiency of these routines provides a direct and representative assessment of the overall performance of the OFRR framework, without the confounding effects of outer iteration strategies or back-end solvers, which typically rely on standard, highly optimized \texttt{BLAS} libraries.

The tests utilized three distinct fixed-size input matrices with entries drawn from the uniform distribution $\mathcal{U}(0,1)$ with dimensions $25000\times200$, $50000\times200$, and $50000\times400$, and were conducted under \texttt{FP64}, \texttt{FP32}, and \texttt{FP16} precision. For \texttt{FP16}, all computations were internally carried out in \texttt{FP32}. Each test was repeated five times, and the average runtime is reported.

\begin{table}[htbp]
    \centering
    \caption{Runtime (in milliseconds) of MGS (without re-orthogonalization), CGS, and Hessenberg process on GPU across multiple precisions and matrix sizes. Here, ``-L'' and ``-R'' denote left- and right-looking variants, respectively, and ``\texttt{CGS-2}'' indicates CGS with re-orthogonalization. For \texttt{FP16}, all computations were internally carried out in \texttt{FP32}.}
    \label{tab:basis_all_precisions}
    \begin{tabular}{ll|ccc}
        \toprule
        \multirow{2}{*}{Precision} & \multirow{2}{*}{Method} & \multicolumn{3}{c}{Matrix Dimensions ($m \times n$)} \\
        \cmidrule(lr){3-5}
         & & $25000\times200$ & $50000\times200$ & $50000\times400$ \\
        \midrule
        \multirow{6}{*}{\texttt{FP64}}
            & \texttt{MGS-L}   & 359.805  & 369.862  & 1402.376 \\
            & \texttt{MGS-R}   & 42.936   & 67.040   & 219.855   \\
            & \texttt{CGS}      & 46.322   & 65.882   & 182.747  \\
            & \texttt{CGS-2}     & 55.346   & 82.591   & 243.225  \\
            & \texttt{Hess-L}  & 119.293  & 132.390 & 480.391  \\
            & \texttt{Hess-R}  & 24.677   & 39.528  & 138.033   \\
        \midrule
        \multirow{6}{*}{\texttt{FP32}}
            & \texttt{MGS-L}   & 334.379  & 335.997  & 1331.892 \\
            & \texttt{MGS-R}   & 23.062   & 27.906   & 99.559   \\
            & \texttt{CGS}      & 14.824   & 21.894  & 72.658  \\
            & \texttt{CGS-2}      & 18.987   & 30.062   & 102.819 \\
            & \texttt{Hess-L}  & 111.796  & 121.182 & 446.195  \\
            & \texttt{Hess-R}  & 16.571   & 24.286  & 77.928   \\
        \midrule
        \multirow{6}{*}{\texttt{FP16}}
            & \texttt{MGS-L}   & 334.164  & 335.369  & 1328.612 \\
            & \texttt{MGS-R}   & 29.499   & 17.821   & 57.887   \\
            & \texttt{CGS}      & 12.971   & 15.894   & 47.782  \\
            & \texttt{CGS-2}      & 15.025   & 20.742   & 64.865  \\
            & \texttt{Hess-L}  & 111.314  & 120.955 & 441.900  \\
            & \texttt{Hess-R}  & 12.754   & 17.273  & 49.223   \\
        \bottomrule
    \end{tabular}
\end{table}

Table~\ref{tab:basis_all_precisions} presents the runtimes (in milliseconds) of various basis generation methods across three matrix sizes and three floating-point precisions on GPU. We first observe a consistent performance advantage for right-looking algorithms (``-R'') over their left-looking counterparts (``-L''). This is especially pronounced for MGS: \texttt{MGS-R} achieves more than $10\times$ speedup over \texttt{MGS-L} across all tested sizes, confirming that right-looking structures are significantly more GPU-friendly due to better memory access and data locality.

Comparing the Hessenberg process to MGS, we find that right-looking Hessenberg (\texttt{Hess-R}) achieves comparable or better performance than \texttt{MGS-R} in most configurations. For instance, at size $50000\times400$ under \texttt{FP32}, \texttt{Hess-R} completes in 77.9ms versus 99.6ms for \texttt{MGS-R}. This is particularly encouraging given that the Hessenberg implementation relies on a custom GPU kernel, which has not yet been fully optimized. Further performance gains are expected with improved kernel-level optimizations, including memory fusion, architecture-aware block sizing, and better occupancy tuning.

The performance advantage of the Hessenberg process over MGS also holds for their left-looking variants. Across all tested sizes and precisions, \texttt{Hess-L} consistently outperforms \texttt{MGS-L}, with speedups ranging from 2--3$\times$ at larger problem scales. This improvement stems largely from the inner-product-free nature of the Hessenberg process. Moreover, while CGS is often considered a more efficient alternative to MGS in left-looking settings, its numerical instability under finite precision can be problematic. For instance, Figure \ref{fig:test_00} demonstrates a case where the instability of CGS impacts the condition number of the basis, highlighting the advantage of \texttt{Hess-L}. In such cases, \texttt{Hess-L} offers both better runtime and improved robustness.

To analyze scaling with respect to matrix dimensions, we compare cases with increasing $m$ and $n$. Doubling $m$ (e.g., $25000\times200$ to $50000\times200$) leads to negligible runtime growth for left-looking methods like \texttt{MGS-L}, reflecting the limited parallelism of reduction-based operations such as inner products. In contrast, doubling $n$ (e.g., $50000\times200$ to $50000\times400$) yields near $4\times$ runtime increase, consistent with the $O(mn^2)$ cost.

Precision-wise, we observe meaningful runtime reductions from \texttt{FP64} to \texttt{FP32} and \texttt{FP16}, but the improvement is highly method-dependent. Right-looking methods benefit most from reduced precision, with \texttt{Hess-R} and \texttt{MGS-R} showing clear speedups. In contrast, left-looking methods, especially \texttt{MGS-L}, show little to no performance gain. For instance, \texttt{MGS-L} takes 334ms under both \texttt{FP32} and \texttt{FP16} at $25000\times200$, essentially unchanged from its \texttt{FP64} time. This can be attributed to the reliance on \texttt{BLAS} level-1 operations, which are limited by memory bandwidth and cannot exploit the arithmetic acceleration from lower-precision units. 

\section{Conclusion}\label{sec6}
In this paper, we investigated the use of the non-orthogonal Rayleigh–Ritz projection method for computing selected eigenvalues and singular values under varying levels of numerical precision. Our study highlights the advantages of the Hessenberg process in low-precision settings, as an alternative to the Modified Gram–Schmidt (MGS) procedure. Specifically, the Hessenberg process demonstrates not only competitive accuracy but also improved efficiency in GPU implementation. While our current implementation of the Hessenberg process already delivers competitive performance, it also reveals untapped potential for further optimization—particularly through the development of custom GPU kernels. These findings position the Hessenberg-based OFRR framework as a promising direction for developing efficient and scalable eigensolvers on modern hardware.



\bibliographystyle{siamplain}
\bibliography{papers}

\end{document}